\newfont{\ffont}{cmr10}
\newcommand{\cH}{\mathcal{H}}
\newcommand{\cA}{\mathcal{A}}
\newcommand{\cE}{\mathcal{E}}
\newcommand{\cO}{\mathcal{O}}
\newcommand{\cM}{\mathcal{M}}
\newcommand{\cN}{\mathcal{N}}
\newcommand{\id}{\mathbf{1}}
\newcommand{\tr}{\tau}
\newcommand{\al}{\alpha}
\newcommand{\si}{\sigma}
\newcommand{\Si}{\Sigma}
\newcommand{\R}{\mathbb{R}}
\newcommand{\C}{\mathbb{C}}
\newcommand{\Z}{\mathbb{Z}}
\newcommand{\ti}{\times}
\newcommand{\xspace}{\hbox{\kern-2.5pt}}
\begin{document}

\newtheorem{definition}{Definition}[section]
\newtheorem{theorem}[definition]{Theorem}
\newtheorem{conjecture}[definition]{Conjecture}
\newtheorem{proposition}[definition]{Proposition}
\newtheorem{corollary}[definition]{Corollary}
\newtheorem{remark}[definition]{Remark}
\newtheorem{lemma}[definition]{Lemma}
\newtheorem{example}{Example}[section]
\newtheorem{exercise}{Exercise}[section]

\title[Weak-type interpolation for noncommutative maximal operators]{Weak-type interpolation for noncommutative maximal operators}
\author{Sjoerd Dirksen}

\address{Universit\"{a}t Bonn\\
Hausdorff Center for Mathematics\\
Endenicher Allee 60\\
53115 Bonn\\
Germany} \email{sjoerd.dirksen@hcm.uni-bonn.de}

\thanks{This research was supported by the Hausdorff Center for Mathematics}
\keywords{Noncommutative symmetric spaces, noncommutative maximal inequalities, Doob's maximal inequality, interpolation theory}
%\subjclass[2000]{Primary 46L50; Secondary: 46L52, 46L54, 60H05}
%\date{}
\maketitle

\begin{abstract}
We prove a Boyd-type interpolation result for noncommutative maximal operators of restricted weak type. Our result positively answers an open question in \cite{BCO12}. As a special case, we find a restricted weak type version of the noncommutative Marcinkiewicz interpolation theorem, due to Junge and Xu, with interpolation constant of optimal order.
\end{abstract}

\section{Introduction}

To any sequence $(T_n)_{n\geq 1}$ of sublinear operators on a space of measurable functions one can associate the operator
$$Tf(t)=\Big(\sup_{n\geq 1}T_n\Big)(f)(t):=\sup_{n\geq 1}|T_nf(t)|.$$
A function of the form $Tf$ is usually called a \emph{maximal function}. We shall refer to $T$ as the \emph{maximal operator} of the sequence $(T_n)_{n\geq 1}$. These operators occur naturally in many situations in harmonic analysis and probability theory. One is often interested to show that a maximal operator defines a bounded sublinear operator on an $L^p$-space or, more generally, on a Banach function space. For instance, the celebrated Doob maximal inequality states that for any increasing sequence of conditional expectations and any $1<p\leq\infty$,
\begin{equation}
\label{eqn:DoobClas} \Big\|\sup_{n\geq 1}\cE_n(f)\Big\|_{L^p} \leq c_p \|f\|_{L^p}.
\end{equation}
A fruitful strategy to prove maximal inequalities with sharp constants is to first prove weak type estimates for the maximal operator. Let $(\cA,\Si,\nu)$ be a $\si$-finite measure space. Recall that a sublinear operator $T$ is of Marcinkiewicz weak type (or \emph{M-weak type}) $(p,p)$ if for any $f \in L^p(\cA)$,
\begin{equation}
\label{eqn:M-weakTypeClas}
[\nu(|Tf|>v)]^{\frac{1}{p}} \leq C v^{-1}\|f\|_{L^p(\cA)} \qquad (v>0),
\end{equation}
If $T$ is bounded in $L^{p}$ then $T$ is said to be of strong type $(p,p)$. To prove (\ref{eqn:DoobClas}) one may first show that $\sup_{n\geq 1}\cE_n$ is of M-weak type $(1,1)$ and strong type $(\infty,\infty)$ and then apply Marcinkiewicz' interpolation theorem. This yields a constant $c_p$ of optimal order $\cO((p-1)^{-1})$ as $p\downarrow 1$. More generally, Boyd's interpolation theorem implies that $\sup_{n\geq 1}\cE_n$ is bounded on any symmetric Banach function space with lower Boyd index $p_E>1$. Weak type interpolation has proven effective for a host of maximal inequalities (see e.g.\ \cite{BeS88,Ste93} for examples).\par
In this paper we deal with interpolation questions for maximal operators of weak type which correspond to a sequence $(T_n)_{n\geq 1}$ of positive, sublinear operators on noncommutative symmetric spaces associated with a semi-finite von Neumann algebra $\cM$. In this setting, the maximal operator $\sup_{n\geq 1}T_n$ is a fictitious object: for example, if $x \in L^p(\cM)_+$ then $\sup_{n\geq 1}T_n(x)$ may not exist in terms of the standard ordering of $L^p(\cM)_+$. Indeed, even for two positive semi-definite $n\ti n$ matrices $x,y$ there may not be a matrix $z$ satisfying
$$\langle z\xi,\xi\rangle = \max\{\langle x\xi,\xi\rangle, \langle y\xi,\xi\rangle\} \qquad (\xi \in \C^n).$$
Even though $\sup_{n\geq 1}T_n(x)$ is not well-defined as an operator, one can still make sense of the quantity `$\|\sup_{n\geq 1}T_n(x)\|_p$' by viewing the sequence $(T_n(x))_{n\geq 1}$ as an element of the noncommutative vector-valued space $L^p(\cM;l^{\infty})$, as
introduced by Pisier \cite{Pis98}. With this point of view, Junge \cite{Jun02} showed that one can obtain a noncommutative extension of (\ref{eqn:DoobClas}). He proved that for any increasing sequence of conditional expectations in $\cM$ and $1<p\leq \infty$,
\begin{equation}
\label{eqn:DoobNC} \|(\cE_n(x))_{n\geq 1}\|_{L^p(\cM;l^{\infty})} \leq C_p \|x\|_{L^p(\cM)}.
\end{equation}
More recently, Junge and Xu \cite{JuX07} proved a Marcinkiewicz interpolation theorem for noncommutative maximal operators (see Theorem~\ref{thm:JuXmain} below). This result allows one to prove (\ref{eqn:DoobNC}) elegantly by interpolating between the M-weak type $(1,1)$-maximal inequality for conditional expectations obtained by Cuculescu \cite{Cuc71} and the trivial case $p=\infty$. This approach yields a constant $C_p$ of order $\mathcal{O}((p-1)^{-2})$ when $p\downarrow 1$, which is known to be optimal \cite{JuX05}. The difference between the optimal order of the constants in the classical and noncommutative Doob maximal inequalities underlines the fact that, in general, the extension of maximal inequalities to the noncommutative setting requires nontrivial new ideas.\par
The purpose of the present paper is to prove interpolation results for noncommutative maximal operators of \emph{restricted} weak type. In the classical case, this means that (\ref{eqn:M-weakTypeClas}) is only required to hold for indicator functions $f=\chi_A$, where $A$ is any set of finite measure. Stein and Weiss \cite{StW59} showed that Marcinkiewicz' interpolation theorem remains valid under this relaxed notion of weak type. This extension has proven especially useful for interpolation problems in harmonic analysis, where the weak-type condition (\ref{eqn:M-weakTypeClas}) is typically hard to verify (see e.g.\ \cite{BeS88,StW59}). We expect that the results proved in this paper will be similarly useful in a noncommutative context.\par
The main result of this paper is the following Boyd-type interpolation theorem, see also Theorem~\ref{thm:maxInt} for a slightly more precise statement. Any unexplained terminology can be found in Sections~\ref{sec:prelim} and \ref{sec:weakType}.
\begin{theorem}
\label{thm:maxIntIntro} Let $1\leq p<q\leq\infty$, let $\cM,\cN$ be semi-finite von Neumann algebras and let $E$ be a fully symmetric Banach function space on $\R_+$ with Boyd indices $p_E\leq q_E$. Suppose that $(T_{\al})_{\al\in A}$ is a net of order preserving, sublinear maps which is of restricted weak types $(p,p)$ and $(q,q)$. If $p<p'<p_E$ and either $q_E<q'<q<\infty$ or $q=\infty$, then for any $x\in E(\cM)_+$,
\begin{equation}
\label{eqn:maxIntGenIntro} \|(T_{\al}(x))_{\al \in A}\|_{E(\cN;l^{\infty})} \leq C_{p,p',q,q'}\|S_{p',q'}\|_{E\rightarrow E} \ \|x\|_{E(\cM)},
\end{equation}
where $S_{p',q'}$ is Calder\'{o}n's operator and $C_{p,p',q,q'}$ is of order $\mathcal{O}((p'-p)^{-1})$ as $p'\downarrow p$ and of order $\mathcal{O}((q-q')^{-1})$ as $q'\uparrow q$.
\end{theorem}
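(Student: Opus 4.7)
The plan is to transplant the classical Calder\'on--Mityagin argument for Boyd interpolation to the noncommutative maximal setting, with spectral projection decompositions of $x$ replacing truncations of a function and, in place of the distribution function of the fictitious $\sup_{\alpha}T_\alpha x$, the infimum of $\tau(g^\perp)$ over projections $g\in\cN$ for which $\|gT_\alpha(x)g\|_\infty$ is small uniformly in $\alpha$ (the quantity naturally dual to Pisier's $L^p(\cN;l^\infty)$-norm for positive sequences).

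For each $\lambda>0$, I would first choose a threshold $t=t(\lambda)$ to be optimized later and split $x\in E(\cM)_+$ via the Borel functional calculus as $x=x_t^{\mathrm{hi}}+x_t^{\mathrm{lo}}$ with $x_t^{\mathrm{hi}}:=x\chi_{(t,\infty)}(x)$ and $x_t^{\mathrm{lo}}:=x\chi_{[0,t]}(x)$. Using $x=\int_0^\infty\chi_{(s,\infty)}(x)\,ds$, both pieces are positive superpositions of spectral projections of $x$ whose traces are read off from the generalized singular value function $\mu(x)$. Order preservation and sublinearity of the $T_\alpha$ then reduce the problem to bounding $T_\alpha$ on these projections, which is exactly what the restricted weak type hypotheses supply.

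Next, I would apply the restricted weak type $(p,p)$ estimate to each projection appearing in $x_t^{\mathrm{hi}}$ and the restricted weak type $(q,q)$ estimate to each one in $x_t^{\mathrm{lo}}$, at level $\lambda/2$. Assembling the resulting controlling projections by a Cuculescu-type union bound produces a single projection $g_\lambda\in\cN$ satisfying $\|g_\lambda T_\alpha(x)g_\lambda\|_\infty\le\lambda$ for all $\alpha$ and
$$\tau(g_\lambda^\perp)\;\le\;C\biggl(\lambda^{-p'}\int_0^{\tau(\chi_{(t,\infty)}(x))}\mu(x)(u)^{p'}\,du\;+\;\lambda^{-q'}\int_{\tau(\chi_{(t,\infty)}(x))}^{\infty}\mu(x)(u)^{q'}\,du\biggr),$$
where the auxiliary exponents $p'$ and $q'$ enter via a Stein--Weiss step: summing the projection-level bounds against weights $s^{1/p'-1/p}$ and $s^{1/q'-1/q}$ produces the integrals $\int_0^{t}s^{p'-p-1}\,ds$ and $\int_t^\infty s^{q'-q-1}\,ds$, which are precisely the source of the announced orders $(p'-p)^{-1}$ and $(q-q')^{-1}$. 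Optimizing $t=t(\lambda)$ then converts the two-sided estimate into a pointwise singular-value majorization
$$\mu_s\bigl((T_\alpha x)_\alpha\bigr)\;\le\;C_{p,p',q,q'}\,\bigl(S_{p',q'}\mu(x)\bigr)(s),\qquad s>0,$$
where the left-hand side is the natural ``generalized singular value'' of the family $(T_\alpha x)_\alpha$ defined through the infimum of $\tau(g^\perp)$ over controlling projections.

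Finally, taking the $E$-norm on both sides and using that $S_{p',q'}$ is bounded on $E$ under the hypothesis $p'<p_E\le q_E<q'$, the fully symmetric property of $E$ transports the singular-value majorization to the desired inequality
$$\|(T_\alpha x)_\alpha\|_{E(\cN;l^\infty)}\;\le\;C_{p,p',q,q'}\|S_{p',q'}\|_{E\to E}\|x\|_{E(\cM)}.$$
The principal obstacle is the Cuculescu-type assembly step: since $\sup_\alpha T_\alpha x$ does not exist as an operator, one has to construct the single controlling projection $g_\lambda$ carefully from the continuum of projections supplied by the restricted weak type estimates for the superposition of spectral projections, in such a way that $\tau(g_\lambda^\perp)$ is controlled linearly and, crucially, that the assembly does not spoil the optimal orders $(p'-p)^{-1}$ and $(q-q')^{-1}$ that a naive dyadic summation would destroy. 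A subsidiary technical point is the precise definition of $\mu_s((T_\alpha x)_\alpha)$ and the verification that its $E$-norm dominates $\|(T_\alpha x)_\alpha\|_{E(\cN;l^\infty)}$, for which the fully symmetric hypothesis on $E$ is essential.
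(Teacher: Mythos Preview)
Your overall Calder\'on--Boyd strategy is right, and so is the recognition that one must pass through $S_{p',q'}$ rather than $S_{p,q}$. But the proposal mislocates the noncommutative difficulty and, as written, contains a genuine gap.

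The step you call a ``subsidiary technical point'' --- defining $\mu_s\bigl((T_\alpha x)_\alpha\bigr)$ via controlling projections and verifying that its $E$-norm dominates $\|(T_\alpha x)_\alpha\|_{E(\cN;l^\infty)}$ --- is in fact the entire noncommutative content of the theorem. The $E(\cN;l^\infty)$-norm of a positive family is $\inf\{\|a\|_{E(\cN)}:T_\alpha(x)\le a\ \forall\alpha\}$; it demands a \emph{single} dominating operator $a$. Having, for each $\lambda$, a projection $g_\lambda$ with $\|g_\lambda T_\alpha(x)g_\lambda\|_\infty\le\lambda$ and small $\si(g_\lambda^\perp)$ does \emph{not} produce such an $a$: the $g_\lambda$ are not nested and do not commute with each other or with $T_\alpha(x)$. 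Commutatively this step is free (take $a=\sup_\alpha T_\alpha(x)$), but noncommutatively it is exactly where the singular constant originates. To build $a$ one must nest the $g_\lambda$ along a dyadic scale, form the differences $d_k=e_k-e_{k-1}$, and bound the off-diagonal pieces $d_kT_\alpha(x)d_l$; the latter do not vanish, and controlling them by Cauchy--Schwarz is what forces the passage from exponent $p$ to $p'$ and contributes the factor of order $(p'-p)^{-1}$.

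Consequently your attribution of the $(p'-p)^{-1}$ to a Stein--Weiss weighting step is misplaced. In the paper's argument the extension from projections $f$ to general $x$ (dyadic layer-cake $x\le\sum_k 2^k f_k$, apply the projection case termwise, sum) introduces \emph{no} additional singular constant: linearity of $S_{p',q'}$ and the submajorization inequality $\mu(\sum a_k)\prec\prec\sum\mu(a_k)$ carry the same $K_{p,p',q,q'}$ through unchanged. The singular factor is generated once, in the construction of the dominating $a$ for a single projection, and that is the step your outline either omits or would duplicate. If you run your level-by-level scheme and then still have to assemble the $g_\lambda$ into one $a$, you will pick up a second factor and end with $(p'-p)^{-2}$ in front of $\|S_{p',q'}\|_{E\to E}$, which for $E=L^r$ gives the wrong order $(r-p)^{-3}$ in Doob's inequality.

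The paper's route inverts your order of operations: it first solves the hard noncommutative problem for a \emph{single} projection $f$ (build $a_f$ with $T_\alpha(f)\le a_f$ and $\mu_t(a_f)\lesssim K_{p,p',q,q'}\,\theta_{p',q'}(t/\tr(f))$ via the nested-projection/Cauchy--Schwarz construction), and only afterwards passes to general $x$ by the easy dyadic decomposition, obtaining $\mu(a)\prec\prec C\,S_{p',q'}\mu(x)$. Full symmetry of $E$ then finishes. The lesson is that in this problem the ``assembly across levels $\lambda$'' is the crux, not the ``assembly across spectral layers of $x$''.
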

Theorem~\ref{thm:maxIntIntro} unifies and extends two main interpolation results for noncommutative maximal operators in the literature. Firstly, Bekjan, Chen and Os\c{e}kowski \cite{BCO12} proved a special case of our result for maximal operators of M-weak type $(p,p)$ and strong type $(\infty,\infty)$, with a larger, suboptimal interpolation constant. Secondly, specialized to $E=L^r$ for $p<r<q$, our result extends the earlier mentioned interpolation theorem in \cite{JuX07} to noncommutative maximal operators of restricted weak type, see the discussion following Corollary~\ref{cor:Marcinkiewicz} for a detailed comparison. In this case, the constant in (\ref{eqn:maxIntGenIntro}) is of order $\mathcal{O}((r-p)^{-2})$ as $r\downarrow p$ and $\mathcal{O}((q-r)^{-2})$ as $r\uparrow q$. In particular, if $p=1$ and $q=\infty$ then Theorem~\ref{thm:maxIntIntro} implies (\ref{eqn:DoobNC}) with constant of the best possible order. In this sense, the constant in (\ref{eqn:maxIntGenIntro}) is optimal.\par
In Section~\ref{sec:mainResults} we illustrate the use of Theorem~\ref{thm:maxIntIntro} by proving Doob's maximal inequality in noncommutative symmetric Banach function spaces under minimal conditions on the underlying function space, see Corollary~\ref{cor:Doob}. In addition, we provide an interpolation result for the generalized moments of noncommutative maximal operators (Theorem~\ref{thm:intMaxOrl}), which resolves an open question in \cite{BCO12}.

\section{Preliminaries}
\label{sec:prelim}

We start by briefly recalling some relevant terminology. Let $S(\R_+)$ be the linear space of all measurable, a.e.\ finite functions $g$ on $\R_+$ which satisfy $\lambda(|g|>v)<\infty$ for some $v>0$, where $\lambda$ is Lebesgue measure on $\R_+$. For any $g \in S(\R_+)$ let $\mu(g)$ denote its decreasing rearrangement
$$\mu_t(g) = \inf\{v>0 \ : \ \lambda(|g|>v) \leq t\} \qquad (t\geq 0).$$
A normed linear subspace $E$ of $S(\R_+)$ is called a \emph{symmetric Banach function space} if it is complete and if for any $g \in S(\R_+)$ and $h\in E$ satisfying $\mu(g)\leq \mu(h)$, we have $g \in E$ and $\|g\|_E\leq \|h\|_E$. Let $H$ be the Hardy-Littlewood operator
$$Hg(t) = \frac{1}{t} \int_0^t \mu_t(g) \ dt \qquad (g \in S(\R_+)).$$
For $g,h \in S(\R_+)$ we write $g\prec\prec h$ if $Hg\leq Hh$. We say that a symmetric Banach function space $E$ is \emph{fully symmetric} if for any $g \in S(\R_+)$ and $h\in E$ satisfying $g\prec\prec h$, we have $g \in E$ and $\|g\|_E\leq \|h\|_E$.\par
Fix a von Neumann algebra $\cM$ acting on a Hilbert space $(\cH,\langle\cdot,\cdot\rangle)$, which is equipped with a normal, semi-finite, faithful trace $\tr$. Let $S(\tr)$ denote the linear space of all $\tr$-measurable operators and let $S(\tr)_+$ be its positive cone. For any $x \in S(\tr)$ its decreasing rearrangement is defined by
\begin{equation*}
\label{eqn:DRoperator} \mu_t(x) = \inf\{v>0 \ : \ \tr(\lambda_{(v,\infty)}(x))\leq t\} \qquad (t\geq 0),
\end{equation*}
where $\lambda(x)$ denotes the spectral measure of $|x|$. Suppose that $a=\sum_{i=1}^n \al_i e_i$, with $\al_1>\al_2>\ldots>\al_n>0$ and $e_1,\ldots,e_n$ projections in $\cM$ satisfying $e_ie_j=0$ for $i\neq j$ and $\tr(e_i)<\infty$ for $1\leq i\leq n$. As in the commutative case (see \cite{BeS88}), it is elementary to show that
\begin{equation}
\label{eqn:DRprojSum}
\mu(a) = \sum_{j=1}^n \al_j \chi_{[\rho_{j-1},\rho_j)},
\end{equation}
where $\rho_0=0$, $\rho_j = \sum_{i=1}^j \tr(p_i)$, $j=1,\ldots,n$. We will also use the submajorization inequality (see \cite{FaK86}, Theorem 4.4)
\begin{equation}
\label{eqn:subMajAdd}
\mu(x+y)\prec\prec \mu(x) + \mu(y) \qquad (x,y \in S(\tr)).
\end{equation}
If $E$ is a symmetric Banach function space on $\R_+$, then we define the associated noncommutative symmetric space
$$E(\cM) := \{x \in S(\tr): \ \|\mu(x)\|_{E}<\infty\}.$$
The space $E(\cM)$ is a Banach space under the norm $\|x\|_{E(\cM)}:=\|\mu(x)\|_{E}$. For $E=L^p$ this construction yields the usual noncommutative $L^p$-spaces.\par
For any directed set $A$ we let $E(\cM;l^{\infty}(A))_+$ denote the set of all nets $x=(x_{\alpha})_{\alpha \in A}$ in $E(\cM)_+$ for which there exists an $a \in E(\cM)_+$ such that $x_{\alpha}\leq a$ for all $\alpha \in A$. For these elements we set
$$\|x\|_{E(\cM;l^{\infty})} := \inf\{\|a\|_{E(\cM)} \ : \ x_{\alpha}\leq a \ \mathrm{for \ all \ \alpha\in A}\}.$$
One may show that, up to a constant depending only on $E$, this expression coincides with the norm of the noncommutative symmetric $l^{\infty}$-valued space $E(\cM;l^{\infty})$ introduced in \cite{Dir12}.\par
Let us finally fix the following notation. For any set $A$ we let $\chi_A$ be its indicator. Also, we write $u\lesssim_{\alpha}v$ if $u\leq c_{\alpha} v$ for some constant $c_{\alpha}$ depending \emph{only} on $\alpha$.

\section{Three flavors of weak type}
\label{sec:weakType}

Before defining the different types of maximal operators, let us recall the classical notions of weak type and restricted weak type for a sublinear operator $T$ on $S(\R_+)$. For $0<p,q\leq\infty$ let $L^{p,q}$ denote the Lorentz spaces on $\R_+$, i.e., the subspace of all $g$ in $S(\R_+)$ such
that
\begin{align*}
\|g\|_{L^{p,q}} = \left\{\begin{array}{rl}(\int_0^{\infty}t^{\frac{q}{p}-1}\mu_t(g)^q \ dt)^{\frac{1}{q}} \qquad & (0<q<\infty),\\
\sup_{0<t<\infty} t^{\frac{1}{p}}\mu_t(g) \qquad & (q=\infty),
\end{array}
\right.
\end{align*}
is finite. Given $0<p<\infty$, we say that $T$ is of weak type $(p,p)$ if there is a constant $C_p>0$ such that for any $g \in L^{p,1}(\R_+)$,
\begin{equation}
\label{eqn:defWTClas}
\lambda(|Tg|>v)^{\frac{1}{p}}\leq C_p v^{-1}\|g\|_{L^{p,1}(\R_+)}.
\end{equation}
An operator $T$ is of restricted weak type $(p,p)$ (as introduced by Stein and Weiss in \cite{StW59}) if (\ref{eqn:defWTClas}) holds only for indicators $g=\chi_A$, where $A$ is any measurable set of finite measure. As is well known, for a given $0<p<\infty$,
$$\mathrm{strong \ type} \ \Rightarrow \ \mathrm{M\small{-}weak \ type} \ \Rightarrow \ \mathrm{weak \ type} \ \Rightarrow \ \mathrm{restricted \ weak \ type}$$
and the reverse implications do not hold in general.\par
For our discussion below we recall the following characterization of weak type operators due to Calder\'{o}n. For $0<p<q<\infty$ we define \emph{Calder\'{o}n's operator} $S_{p,q}$ as the linear operator
$$S_{p,q}g(t) = \tfrac{1}{p}t^{-\frac{1}{p}}\int_0^t s^{\frac{1}{p}}g(s) \frac{ds}{s} + \tfrac{1}{q}t^{-\frac{1}{q}} \int_t^{\infty}s^{\frac{1}{q}}g(s) \frac{ds}{s} \ \qquad (t>0, \ g\in S(\R_+))$$
and for $0<p<\infty$ we set
$$S_{p,\infty}g(t) = \tfrac{1}{p}t^{-\frac{1}{p}}\int_0^t s^{\frac{1}{p}}g(s) \frac{ds}{s} \qquad (t>0, \ g\in S(\R_+)).$$
In \cite{Cal66} Calder\'{o}n proved that a sublinear operator $T$ on $S(\R_+)$ is simultaneously of weak types $(p,p)$ and $(q,q)$ if and only if it satisfies
\begin{equation}
\label{eqn:caldDomClas}
\mu_t(Tg) \lesssim_{p,q} \big(S_{p,q}\mu(g)\big)(t) \qquad (\mathrm{for \ all } \ g \in S(\R_+)).
\end{equation}
In fact, Calder\'{o}n's proof shows that $T$ is of restricted weak types $(p,p)$ and $(q,q)$ if and only if it satisfies
\begin{equation}
\label{eqn:caldDomClasRes}
\mu_t(T\chi_A) \lesssim_{p,q} \big(S_{p,q}\mu(\chi_A)\big)(t),
\end{equation}
for any measurable set $A$ of finite measure.\par
We now extend these definitions to noncommutative maximal operators. Throughout, let $\cM$ and $\cN$ be von Neumann algebras equipped with normal, semi-finite, faithful traces $\tr$ and $\si$, respectively. For any projection $e$ we set $e^{\perp}:=1-e$. Also, if $f$ is another projection, then we let $e\vee f$ and $e\wedge f$ denote the supremum and infimum, respectively, of $e$ and $f$.
\begin{definition}
\label{def:weakType} \emph{For any $0<r<\infty$ we say that a net $(T_{\al})_{\al\in A}$ of maps $T_{\al}:L^{r}(\cM)_+\rightarrow S(\si)_+$ is of} M-weak type $(r,r)$ \emph{if there is a constant $C_r>0$ such that for any $x \in L^r(\cM)_+$ and any $\theta>0$, there exists a projection
$e^{(\theta)}=e_{x}^{(\theta)}$ satisfying}
\begin{equation}
\label{eqn:M-weakType}
\si((e^{(\theta)})^{\perp}) \leq (C_r\theta^{-1})^r\|x\|_{L^r(\cM)}^r \ \ \mathrm{and} \ \ e^{(\theta)}T_{\al}(x)e^{(\theta)}\leq \theta, \ \mathrm{for \ all} \ \al \in A.
\end{equation}
\emph{A net $(T_{\al})_{\al\in A}$ of maps $T_{\al}:L^{r,1}(\cM)_+\rightarrow S(\si)_+$ is of} restricted weak type $(r,r)$ \emph{if there is a constant $C_r>0$ such that for any projection $f$ in $L^{r,1}(\cM)_+$ and any $\theta>0$, there is a projection $e^{(\theta)}=e_{f}^{(\theta)}$ such that}
\begin{equation}
\label{eqn:weakType} \si((e^{(\theta)})^{\perp}) \leq (C_r\theta^{-1})^r\tr(f) \ \ \mathrm{and} \ \ e^{(\theta)}T_{\al}(f)e^{(\theta)}\leq \theta, \ \mathrm{for \ all} \
\al \in A.
\end{equation}
\emph{A net $(T_{\al})_{\al \in A}$ of maps $T_{\al}:\cM_+\rightarrow\cN_+$ is of} restricted weak type $(\infty,\infty)$ \emph{if there is a constant
$C_{\infty}>0$ such that for any projection $f$ in $\cM$,}
$$\sup_{\al \in A}\|T_{\al}(f)\|_{\infty} \leq C_{\infty}.$$
\emph{A net $(T_{\al})_{\al\in A}$ of maps $T_{\al}:L^{r}(\cM)_+\rightarrow S(\si)_+$ is of} strong
type $(r,r)$ \emph{if}
$$\|(T_{\al}(x))_{\al \in A}\|_{L^r(\cN;l^{\infty})} \leq C_r \|x\|_{L^r(\cM)}.$$
\end{definition}
In the commutative case, a sequence $(T_{n})_{n\geq 1}$ is of restricted weak type $(r,r)$ in the sense of Definition~\ref{def:weakType} if $\sup_{n\geq 1}T_{n}$ is of restricted weak type $(r,r)$ in the classical sense. Indeed, in this case one may take $e^{(\theta)}=\chi_{[0,\theta]}(\sup_{n\geq 1}T_{n}(f))$. Thus, loosely speaking, (\ref{eqn:weakType}) states that the fictitious noncommutative maximal operator `$\sup_{\al}T_{\al}$' is of restricted weak type $(r,r)$.
\begin{remark}
\label{rem:M-weaktype}
\emph{In the noncommutative literature (e.g.\ in \cite{BCO12,JuX07}), it has become customary to refer to property (\ref{eqn:M-weakType}) as} weak type\emph{, instead of M-weak type. The terminology used here is in accordance with the classical literature on interpolation theory.}
\end{remark}
We will often assume that the maps $T_{\al}$ satisfy additional properties. We call a map $T:S(\tr)_+\rightarrow S(\si)_+$ \emph{sublinear} if
$$T(cx+dy) \leq cT(x) + dT(y) \qquad (c,d \in \R_+, x,y\in S(\tr)_+)$$
and \emph{order preserving} if $T(x)\leq T(y)$ whenever $x\leq y$ in $S(\tr)_+$.\par
Using real interpolation and duality techniques Junge and Xu proved the following version of Marcinkiewicz' interpolation theorem for noncommutative maximal operators.
\begin{theorem}
\label{thm:JuXmain}
(\cite{JuX07}, Theorem 3.1) Let $1\leq p<q\leq\infty$. If a net $(T_{\al})_{\al\in A}$ of positive, subadditive maps $(T_{\al})$ is of M-weak type $(p,p)$ and strong type $(q,q)$, then for any $p<r<q$,
\begin{equation}
\label{eqn:JuXmain}
\|(T_{\al}(x))_{\al \in A}\|_{L^r(\cN;l^{\infty})} \lesssim C_p^{1-\theta}C_q^{\theta}\Big(\frac{rp}{r-p}\Big)^2 \ \|x\|_{L^r(\cM)},
\end{equation}
where $\theta$ is chosen such that $1/r = (1-\theta)/p + \theta/q$.
\end{theorem}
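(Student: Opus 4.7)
My approach is to adapt the classical Calder\'on--Marcinkiewicz proof to the noncommutative setting, leveraging the dominating-envelope definition of $\|\cdot\|_{L^r(\cN;l^{\infty})}$ recalled in Section~\ref{sec:prelim}. The plan is to build, from $x\in L^r(\cM)_+$, a single element $a\in L^r(\cN)_+$ with $T_{\al}(x)\leq a$ for all $\al$ by patching level-by-level pieces produced from the two hypotheses.

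First, I would perform a dyadic level decomposition. Fix $x\in L^r(\cM)_+$ and dyadic levels $\theta_k=2^k$, $k\in\Z$. Using the spectral projections $\pi_k=\chi_{(\theta_k,\infty)}(x)$, split $x=x_k^1+x_k^2$ with $x_k^1=x\pi_k$ the ``large part'' (naturally controlled in $L^p$) and $x_k^2=x(1-\pi_k)\leq\theta_k$ the ``small part'' (naturally controlled in $L^q$). Standard layer-cake identities for $\mu(x)$ then bound $\|x_k^1\|_p^p$ and $\|x_k^2\|_q^q$ in terms of tails and heads of $\mu(x)$ around $\theta_k$.

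Second, I would apply the two hypotheses level by level. Positivity and subadditivity give $T_{\al}(x)\leq T_{\al}(x_k^1)+T_{\al}(x_k^2)$ for every $\al$. The M-weak type $(p,p)$ hypothesis at level $C\theta_k$ applied to $x_k^1$ yields a projection $f_k\in\cN$ with $\si(f_k^{\perp})\lesssim(C_p\theta_k^{-1})^p\|x_k^1\|_p^p$ and $f_kT_{\al}(x_k^1)f_k\leq C\theta_k$ uniformly in $\al$. The strong type $(q,q)$ hypothesis applied to $x_k^2$ produces $a_k\in L^q(\cN)_+$ with $T_{\al}(x_k^2)\leq a_k$ for all $\al$ and $\|a_k\|_q\leq C_q\|x_k^2\|_q$. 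Combining on the support of $f_k$,
$$f_kT_{\al}(x)f_k\leq C\theta_k+f_ka_kf_k\leq C\theta_k+a_k\qquad(\al\in A).$$
I would then assemble a single envelope by passing to a commuting, nested sequence refined from $(f_k)$, assigning within each spectral band the local ceiling $\theta_k$ plus a copy of $a_k$; when $q=\infty$ the second piece contributes only a bounded term. Estimating $\|a\|_r^r$ reduces to two geometric sums, controlled by $\sum_k\theta_k^{r-p}\|x_k^1\|_p^p\lesssim(r-p)^{-1}\|x\|_r^r$ (using an identity of the type (\ref{eqn:DRprojSum})) and a symmetric one on the $q$-side.

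The main obstacle, and the origin of the squared factor $(rp/(r-p))^2$, is that the envelope $a$ must simultaneously dominate $T_{\al}(x)$ for \emph{every} $\al\in A$, which is strictly stronger than the classical good-$\lambda$ inequality at a single level. Organizing the projections $f_k$ into a nested, commuting sequence---so the envelope actually lives in $L^r(\cN)_+$ and is not merely a net of pieces---introduces a second $(r-p)^{-1}$ factor on top of the classical Calder\'on-operator constant. This nested patching across scales is where the noncommutative structure genuinely bites; in \cite{JuX07} the step is handled by real interpolation and duality rather than a direct Calder\'on decomposition, and tracking that the combined blow-up remains quadratic (rather than cubic) in $(r-p)^{-1}$ is the principal technical difficulty.
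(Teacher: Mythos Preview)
The paper does not prove Theorem~\ref{thm:JuXmain}; it is quoted from \cite{JuX07} as a background result, and the only information given about the original proof is the remark that it proceeds ``using real interpolation and duality techniques.'' There is therefore no proof in the present paper to compare against, and you correctly note that \cite{JuX07} does not argue via a direct Calder\'on decomposition.

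Your plan is, however, close in spirit to the machinery the paper builds for its \emph{own} results (Lemma~\ref{lem:majorant} and Theorem~\ref{thm:CaldDom}): the ``commuting, nested sequence refined from $(f_k)$'' you describe is exactly the construction $e_k=\bigwedge_{l\geq k}e^{(2^l)}$, $d_k=e_k-e_{k-1}$ in the proof of Lemma~\ref{lem:majorant}, and the second factor of $(r-p)^{-1}$ arises there from the Cauchy--Schwarz step in (\ref{eqn:inprodEst}) that upgrades the band estimates $e_kT_{\al}(x)e_k\leq 2^k$ to a genuine domination $T_{\al}(x)\leq a$.

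That said, there is a genuine gap in your outline at precisely the assembly step. In Lemma~\ref{lem:majorant} both endpoints are of weak type, so on every band $d_k$ one has a \emph{scalar} bound $\|d_kT_{\al}(f)d_k\|_{\infty}\leq 2^k$, and the Cauchy--Schwarz trick (\ref{eqn:inprodEst}) runs. In your setting the strong-type piece contributes $f_ka_kf_k$ with $a_k\in L^q(\cN)_+$ an operator, not a scalar; the $a_k$ need not commute with the $d_l$ or with one another, so neither the diagonal-sum candidate $\sum_k d_ka_kd_k$ nor the Cauchy--Schwarz argument applies as written. One can of course downgrade the strong type $(q,q)$ hypothesis to M-weak type $(q,q)$ via Chebyshev on $a_k$ and then run the paper's construction verbatim, but this yields a constant of the form in Corollary~\ref{cor:Marcinkiewicz}, with an additional $(q-r)^{-2}$ blow-up and $\max\{C_p,C_q\}$ in place of the sharper $C_p^{1-\theta}C_q^{\theta}$ dependence in (\ref{eqn:JuXmain}). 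Recovering the exact constant of \cite{JuX07} by a direct envelope construction appears to require a genuinely new idea beyond what you have sketched.
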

In Corollary~\ref{cor:Marcinkiewicz} below we will obtain an extension of this result to maximal operators of restricted weak types $(p,p)$ and $(q,q)$.

\section{A Calder\'{o}n-type bound for maximal operators of weak type}

Our starting point for the proof of Theorem~\ref{thm:maxIntIntro} is the characterization (\ref{eqn:caldDomClasRes}). A first thought is to attempt to establish the following direct generalization: if $(T_{\al})$ is of restricted weak types $(p,p)$ and $(q,q)$ then one may try to find, for every projection $f$, a positive measurable operator $a$ satisfying $T_{\al}(f)\leq a$ for all $\al$ and
$$\mu_t(a) \lesssim_{p,q} \big(S_{p,q}\mu(f)\big)(t) \qquad (t>0).$$
Unfortunately, if $\cN$ is noncommutative then this assertion is in general false. Indeed, it would imply that the noncommutative Doob maximal inequality (\ref{eqn:DoobNC}) holds with constant of order $\mathcal{O}((p-1)^{-1})$ for $p\downarrow 1$, whereas the order $\mathcal{O}((p-1)^{-2})$ is known to be optimal \cite{JuX05}. However, we can find a bound of the form
$$\mu_t(a) \leq C_{p,p',q,q'} \big(S_{p',q'}\mu(f)\big)(t),$$
where $p<p'<q'<q$ and $C_{p,p',q,q'}$ is singular as $p'\downarrow p$ and $q'\uparrow q$.\par
For $0<p,q<\infty$ we introduce the constants
\begin{equation}
\label{eqn:kappapq}
\begin{split}
\kappa_{p,q} & = 2^{\frac{1}{p}}\max\{C_p(1-2^{-p})^{-\frac{1}{p}},C_q(1-2^{-q})^{-\frac{1}{q}}\}, \\
\kappa_{p,\infty} & = \max\{C_p(1-2^{-p})^{-\frac{1}{p}}, C_{\infty}\},
\end{split}
\end{equation}
where $C_p$ and $C_q$ are the restricted weak type $(p,p)$ and $(q,q)$ constants in (\ref{eqn:weakType}). If $p,q\geq 1$ then $\kappa_{p,q}\leq 4\max\{C_p,C_q\}$ and $\kappa_{p,\infty}\leq 2\max\{C_p,C_\infty\}$. Also, for $p<p'<q'<q<\infty$ we set
$$\gamma_{p,p'} = \sum_{k\leq 0}2^{(k-1)(1-\frac{p}{p'})}, \qquad \delta_{q,q'}=\sum_{k>0}2^{(k-1)(1-\frac{q}{q'})}.$$
Note that
$$\gamma_{p,p'} = \cO((p'-p)^{-1}) \ \mathrm{as \ } p'\downarrow p, \qquad \delta_{q,q'} = \cO((q-q')^{-1}) \ \mathrm{as \ } q'\uparrow q.$$
\begin{lemma}
\label{lem:majorant} Fix $0<p<q\leq\infty$. Let $(T_{\al})_{\al \in A}$ be a net of positive maps which is of restricted weak types $(p,p)$ and $(q,q)$. Let $p<p'<q'<q$. If $q<\infty$
and $f$ is a projection in $L^{p,1}(\cM)_+ + L^{q,1}(\cM)_+$, then there exists a constant $K_{p,p',q,q'}$ and an $a \in S(\si)_+$ such that
\begin{equation}
\label{eqn:majProp1} T_{\al}(f)\leq a \qquad (\al \in A)
\end{equation}
and
\begin{equation}
\label{eqn:majProp2} \mu_t(a) \leq \kappa_{p,q}K_{p,p',q,q'} \ S_{p',q'}\mu(f)(t)\qquad (t>0).
\end{equation}
Moreover, the constant $K_{p,p',q,q'}$ satisfies
\begin{align}
\label{eqn:majPropConst}
K_{p,p',q,q'} & \leq 4\max\{\gamma_{p,p'},\delta_{q,q'}\}.
\end{align}
If $q=\infty$, then for every projection $f \in \cM_+$ there exists a constant $K_{p,p',\infty,\infty}\leq 4\gamma_{p,p'}$ and an $a \in \cN_+$ satisfying (\ref{eqn:majProp1}) and
\begin{equation}
\label{eqn:majProp2Infty} \mu_t(a) \leq \kappa_{p,\infty}K_{p,p',\infty,\infty} \ S_{p',\infty}\mu(f)(t)\qquad (t>0).
\end{equation}
\end{lemma}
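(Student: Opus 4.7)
The plan is to construct the majorant $a$ explicitly as a diagonal operator with respect to an orthogonal family of projections obtained from a dyadic application of the two restricted weak-type hypotheses, and to obtain the domination $T_\al(f) \leq a$ via an operator Cauchy--Schwarz argument. The rearrangement bound is then read off from the distribution of this diagonal operator.

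\textbf{Step 1 (dyadic projections).} Set $\theta_k := \kappa_{p,q}\,2^k$ for $k\in\Z$. For each $k\leq 0$, invoke the restricted weak-type $(p,p)$ hypothesis at level $\theta_k$ to obtain a projection $e_k$ with $e_k T_\al(f) e_k \leq \theta_k$ for all $\al$ and $\si(e_k^\perp) \leq C_p^p\theta_k^{-p}\tr(f)$; for $k>0$ with $q<\infty$, use the $(q,q)$-hypothesis analogously. In the case $q=\infty$ set $e_k:=\id$ for $k>0$, which is permissible since $\|T_\al(f)\|_\infty \leq C_\infty \leq \theta_1=2\kappa_{p,\infty}$.

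\textbf{Step 2 (monotonisation).} The $e_k$ need not be comparable, so replace them by the increasing family $\hat e_k := \bigwedge_{j\geq k} e_j$. Since $\hat e_k \leq e_k$, we have $\hat e_k T_\al(f)\hat e_k \leq \theta_k$, and by the union bound on orthogonal complements,
\[
\si(\hat e_k^\perp) \;\leq\; \sum_{j\geq k}\si(e_j^\perp).
\]
Summing the geometric series, this is still dominated by a constant multiple of $\theta_k^{-p}\tr(f)$ (resp.\ $\theta_k^{-q}\tr(f)$) for $k\leq 0$ (resp.\ $k>0$); the $(1-2^{-p})^{-1/p}$ and $(1-2^{-q})^{-1/q}$ factors are exactly those absorbed into the definition of $\kappa_{p,q}$ in (\ref{eqn:kappapq}).

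\textbf{Step 3 (double Cauchy--Schwarz).} Set $P_k := \hat e_k - \hat e_{k-1}$, an orthogonal family with $\sum_k P_k = \id$. For $P_k \leq \hat e_k$ one has $P_k T_\al(f) P_k \leq \theta_k P_k$, so $\|T_\al(f)^{1/2} P_k\xi\|^2 \leq \theta_k\|P_k\xi\|^2$. Applying Cauchy--Schwarz to each matrix element $\langle T_\al(f)P_j\xi,P_k\xi\rangle = \langle T_\al(f)^{1/2}P_j\xi, T_\al(f)^{1/2}P_k\xi\rangle$ and summing yields
\[
\langle T_\al(f)\xi,\xi\rangle \;\leq\; \Big(\sum_k \sqrt{\theta_k}\,\|P_k\xi\|\Big)^{\!2}.
\]
A second, weighted Cauchy--Schwarz with positive weights $\om_k$ then gives
\[
\Big(\sum_k \sqrt{\theta_k}\,\|P_k\xi\|\Big)^{\!2} \;\leq\; \Big(\sum_k \theta_k/\om_k\Big)\Big(\sum_k \om_k\|P_k\xi\|^2\Big).
\]
The key choice is $\om_k := \theta_k^{p/p'}$ for $k\leq 0$ and $\om_k := \theta_k^{q/q'}$ for $k>0$: the exponents $1-p/p'>0$ and $1-q/q'<0$ ensure convergence of $S := \sum_k \theta_k/\om_k$ at the two ends of the dyadic range, producing precisely the sums $\gamma_{p,p'}$ and $\delta_{q,q'}$, so that $S \lesssim \kappa_{p,q}\max\{\gamma_{p,p'},\delta_{q,q'}\}$. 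Setting $a := S\sum_k \om_k P_k$ gives $\langle T_\al(f)\xi,\xi\rangle \leq \langle a\xi,\xi\rangle$ for every $\xi$, i.e.\ $T_\al(f)\leq a$ as required by (\ref{eqn:majProp1}).

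\textbf{Step 4 (rearrangement estimate).} Since $a$ is diagonal with spectrum $\{S\om_k\}$ on the orthogonal subspaces $\{P_k\}$, and since $\om_k$ is monotone increasing in $k$,
\[
\si(\{a>v\}) \;=\; \sum_{k:\,S\om_k>v}\si(P_k) \;=\; \si(\hat e_{K(v)}^\perp),
\]
where $K(v)$ is the largest integer with $S\om_{K(v)}\leq v$. Inserting $\si(\hat e_k^\perp)\lesssim \theta_k^{-p}\tr(f)$ (for $K(v)\leq 0$) or $\lesssim \theta_k^{-q}\tr(f)$ (for $K(v)>0$) and inverting $\om_{K(v)}\sim v/S$ gives $\theta_{K(v)}\sim (v/S)^{p'/p}$ (resp.\ $(v/S)^{q'/q}$), whence
\[
\mu_t(a) \;\lesssim\; S\,(\tr(f)/t)^{1/p'} \quad (t>\tr(f)), \qquad \mu_t(a) \;\lesssim\; S\,(\tr(f)/t)^{1/q'} \quad (t\leq \tr(f)).
\]
A direct computation (splitting at $t=\tr(f)$) shows $S_{p',q'}\mu(f)(t)$ equals $(\tr(f)/t)^{1/q'}$ for $t\leq \tr(f)$ and $(\tr(f)/t)^{1/p'}$ for $t>\tr(f)$, so combining the two regimes yields (\ref{eqn:majProp2}) with $K_{p,p',q,q'}\leq 4\max\{\gamma_{p,p'},\delta_{q,q'}\}$. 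The case $q=\infty$ reduces because $\hat e_k=\id$ for $k>0$: the sum truncates at $k=0$, only the $\gamma_{p,p'}$-factor appears, and one obtains $K_{p,p',\infty,\infty}\leq 4\gamma_{p,p'}$ as in (\ref{eqn:majProp2Infty}).

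\textbf{Main obstacle.} The delicate point is Step~3: the commutative analog is handled by a genuine spectral decomposition of $\sup_\al T_\al$ and never requires the auxiliary exponent $p'$. Noncommutatively the off-diagonal contributions $P_j T_\al(f) P_k$ with $j\neq k$ cannot be discarded, forcing the weighted Cauchy--Schwarz step and hence a convergent choice of $\om_k$ at \emph{both} ends of the index range $k\in\Z$. No translation-invariant choice of weights achieves this; the dyadic exponents $p/p'<1$ and $q/q'>1$ are essentially the only way, and it is precisely the geometric series $\gamma_{p,p'}\asymp (p'-p)^{-1}$ and $\delta_{q,q'}\asymp (q-q')^{-1}$ that then produce the singular interpolation constant.
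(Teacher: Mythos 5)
Your proposal is correct and follows essentially the same route as the paper: dyadic thresholds, intersected projections $\hat e_k$ made increasing, the differences $P_k=\hat e_k-\hat e_{k-1}$, a double (weighted) Cauchy--Schwarz with weights $2^{kp/p'}$ on the low end and $2^{kq/q'}$ on the high end producing $\gamma_{p,p'}$ and $\delta_{q,q'}$, and the rearrangement of the resulting diagonal operator compared with $S_{p',q'}\mu(f)(t)=\theta_{p',q'}(t/\tr(f))$. The only inaccuracies are minor and repaired exactly as in the paper: $\sum_k P_k=\id$ fails in general, since the residual projection $e_{-\infty}=\bigwedge_{k\leq 0}\hat e_k$ may be nonzero (but $e_{-\infty}T_{\al}(f)e_{-\infty}=0$, so it contributes nothing), and the unbounded series defining $a$ must be realized as an increasing limit of truncations $a_N$ in $S(\si)_+$ (with the quadratic-form inequality interpreted on $D(a^{1/2})$), which is where the uniform bound on $\mu(a_N)$ is used.
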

\begin{proof}
We use the functions $\theta_{p,q}:\R_+\rightarrow\R_+$, which for $0<p,q\leq\infty$ are defined by
\begin{align*}
\theta_{p,q}(t) & = t^{-\frac{1}{p}}\chi_{[1,\infty)}(t) + t^{-\frac{1}{q}}\chi_{(0,1)}(t) \qquad (t>0)
\end{align*}
and for $0<p<\infty$ given by
$$\theta_{p,\infty}(t) = t^{-\frac{1}{p}}\chi_{[1,\infty)}(t) + \chi_{(0,1)}(t) \qquad (t>0).$$
We first prove the result for $q<\infty$. In this case $\tr(f)<\infty$. Using the change of variable $s=\tfrac{t}{u}$, we find
\begin{eqnarray}
\label{eqn:thetapqSpq}
S_{p,q}\mu(f)(t) & = & \int_0^1 \mu_{\frac{t}{u}}(f)\tfrac{1}{q}u^{-1-\frac{1}{q}}du + \int_1^{\infty}\mu_{\frac{t}{u}}(f)\tfrac{1}{p}u^{-1-\frac{1}{p}}du \\
& = & \int_0^{\infty}\chi_{(0,\tr(f)]}\Big(\frac{t}{u}\Big)(-\theta_{p,q}'(u))du \nonumber \\
& = & \theta_{p,q}\Big(\frac{t}{\tr(f)}\Big). \nonumber
\end{eqnarray}
Thus, we need to find an $a \in S(\si)_+$ satisfying (\ref{eqn:majProp1}) and
\begin{equation}
\label{eqn:majProp3} \mu_t(a) \leq \kappa_{p,q}K_{p,p',q,q'}\theta_{p',q'}\Big(\frac{t}{\tr(f)}\Big) \qquad (t>0).
\end{equation}
Let us first assume that $\kappa_{p,q}\leq 1$. For any $\theta>1$ fix a projection $e_q^{(\theta)}$ satisfying (\ref{eqn:weakType}) for $r=q$ and for $0<\theta\leq 1$ we
pick $e_p^{(\theta)}$ such that (\ref{eqn:weakType}) holds for $r=p$. For every $k\in\Z$ we define
$$e_k = \Big(\bigwedge_{l\geq k} e_q^{(2^l)}\Big) \ \ \ (k>0), \qquad
e_k = \Big(\bigwedge_{0\geq l\geq k} e_p^{(2^l)}\Big) \wedge \Big(\bigwedge_{l\geq 0} e_q^{(2^l)}\Big) \ \ \ (k\leq 0)$$ and we set
$$d_k = e_k - e_{k-1}.$$
Observe that $(e_k)_{k\in \Z}$ is increasing, and therefore $d_kd_l=0$ for $k\neq l$ and $d_k^2=d_k$. Note that $e_k \leq e_q^{(2^k)}$ for $k>0$ and $e_k \leq e_p^{(2^k)}$ for $k\leq 0$, hence $e_kT_{\al}(x)e_k\leq 2^k$ for all $k$. If $k>0$ then, using $\kappa_{p,q}\leq 1$,
\begin{eqnarray}
\label{eqn:perpEst1} \si(e_k^{\perp}) & \leq & \sum_{l\geq k} \si((e_q^{(2^l)})^{\perp}) \\
& \leq & \sum_{l\geq k} C_q^q 2^{-lq} \tr(f) \leq 2^{-kq}C_q^q\frac{1}{1-2^{-q}}\tr(f)\leq 2^{-kq}\tr(f),\nonumber
\end{eqnarray}
and in particular it follows that $e_k\uparrow 1$ for $k\rightarrow\infty$. Moreover, if $k\leq 0$, then again using $\kappa_{p,q}\leq 1$,
\begin{eqnarray}
\label{eqn:perpEst2} \si(e_k^{\perp}) & \leq & \sum_{0\geq l\geq k} \si((e_p^{(2^l)})^{\perp}) + \sum_{l\geq 0} \si((e_q^{(2^l)})^{\perp}) \\
& \leq & 2^{-kp}(C_p^p(1-2^{-p})^{-1}+ C_q^q(1-2^{-q})^{-1})\tr(f) \leq 2^{-kp}\tr(f). \nonumber
\end{eqnarray}
Finally, we set $e_{-\infty}:=\wedge_{k\leq 0} \ e_k$. Since $e_k\uparrow 1$,
\begin{equation}
\label{eqn:e0e0perp}
e_0 = \sum_{k\leq 0} d_k + e_{-\infty}, \qquad e_0^{\perp} = \sum_{k>0}d_k.
\end{equation}
Set $K_{p,p',q,q'} = 4\max\{\gamma_{p,p'},\delta_{q,q'}\}$ and let $(a_N)_{N\geq 1}$ be the sequence in $\cM_+$ given by
$$a_N = K_{p,p',q,q'}\Big(\sum_{-\infty<k\leq 0}2^{(k-1)p/p'} d_k + \sum_{0<k\leq N} 2^{(k-1)q/q'}d_k\Big).$$
As our candidate for the sought operator $a \in S(\si)_+$ we would like to define $a:=\lim_{N\rightarrow\infty} a_N$. To show that this limit exists in $S(\si)$, we will first show that the estimate (\ref{eqn:majProp3}) is satisfied for $a=a_N$, uniformly in $N$. Since the coefficients of the $d_k$ are increasing, we find using (\ref{eqn:DRprojSum}) that
\begin{align*}
\mu(a_N) & = K_{p,p',q,q'}\Big(\sum_{k\leq 0} 2^{(k-1)p/p'} \chi_{[\si(e_N-e_k),\si(e_N-e_{k-1}))} \\
& \ \ \ \ + \sum_{0<k\leq N-1} 2^{(k-1)q/q'} \chi_{[\si(e_N-e_k),\si(e_N-e_{k-1}))} + 2^{(N-1)q/q'}\chi_{[0,\si(e_N-e_{N-1}))}\Big) \\
& \leq K_{p,p',q,q'}\Big(\sum_{k\leq 0} 2^{(k-1)p/p'} \chi_{[\si(1-e_k),\si(1-e_{k-1}))} \\
& \ \ \ \ + \sum_{0<k\leq N-1} 2^{(k-1)q/q'} \chi_{[\si(1-e_k),\si(1-e_{k-1}))} + 2^{(N-1)q/q'}\chi_{[0,\si(1-e_{N-1}))}\Big)
\end{align*}
and by applying (\ref{eqn:perpEst1}) and (\ref{eqn:perpEst2}) it follows that
\begin{align*}
& \mu(a_N)\leq K_{p,p',q,q'}\Big(\sum_{k\leq 0}2^{(k-1)p/p'} \chi_{[2^{-kp}\tr(f),2^{-(k-1)p}\tr(f))} \\
& \qquad \qquad + \sum_{0<k<N}2^{(k-1)q/q'} \chi_{[2^{-kq}\tr(f),2^{-(k-1)q}\tr(f))} + 2^{(N-1)q/q'}\chi_{[0,2^{-(N-1)q}\tr(f))}\Big).
\end{align*}
If $k\leq 0$ and $2^{-kp}\tr(f)\leq t< 2^{-(k-1)p}\tr(f)$, then
$$2^{k-1} < \Big(\frac{t}{\tr(f)}\Big)^{-\frac{1}{p}}$$
and therefore
$$2^{(k-1)p/p'} < \Big(\frac{t}{\tr(f)}\Big)^{-\frac{1}{p'}}$$
For all $k>0$ and $2^{-kq}\tr(f)\leq t< 2^{-(k-1)q}\tr(f)$ we find
$$2^{k-1} < \Big(\frac{t}{\tr(f)}\Big)^{-\frac{1}{q}}$$
and so
$$2^{(k-1)q/q'} < \Big(\frac{t}{\tr(f)}\Big)^{-\frac{1}{q'}}.$$
We conclude that for any $t>0$,
\begin{align*}
\mu_t(a_N) & \leq K_{p,p',q,q'}\Big(\Big(\frac{t}{\tr(f)}\Big)^{-\frac{1}{p'}}\chi_{[\tr(f),\infty)}(t) + \Big(\frac{t}{\tr(f)}\Big)^{-\frac{1}{q'}}\chi_{(0,\tr(f))}(t)\Big) \\
& \leq K_{p,p',q,q'}
\theta_{p',q'}\Big(\frac{t}{\tr(f)}\Big).
\end{align*}
Since this inequality holds uniformly in $N$, we conclude that $(a_N)_{N\geq 1}$ is an increasing sequence which is bounded in measure. Hence, by \cite{Pag07}, Theorem
5.10, there exists an $a \in S(\si)_+$ such that $a_N\uparrow a$ in $S(\si)_+$. We claim that $a$ has the asserted properties. Since $\mu(a_N)\uparrow \mu(a)$
(\cite{Pag07}, Proposition 6.5) it is clear that (\ref{eqn:majProp3}) holds. Since $T_{\al}(f)\geq 0$, we know that (see e.g.\ \cite{Dir12}, Lemma 5.9)
$$T_{\al}(f) \leq 2e_0T_{\al}(f)e_0 + 2e_0^{\perp}T_{\al}(f)e_0^{\perp}.$$
For any $\xi$ in the domain $D(a^{\frac{1}{2}})$ of $a^{\frac{1}{2}}$ we have
$$\|a^{\frac{1}{2}}\xi\|^2 = \lim_{N\rightarrow\infty}\langle a_N\xi,\xi\rangle = K_{p,p',q,q'}\Big(\sum_{k\leq 0}2^{(k-1)p/p'} \langle d_k\xi,\xi\rangle + \sum_{k>0}2^{(k-1)q/q'} \langle d_k\xi,\xi\rangle\Big).$$
Notice that $e_{-\infty}T_{\al}(f)e_{-\infty} = e_{-\infty}e_kT_{\al}(f)e_ke_{-\infty}\leq 2^{k}e_{-\infty}$ for all $k\leq 0$ and therefore
$e_{-\infty}T_{\al}(f)e_{-\infty}=0$. Moreover,
$$\|e_0T_{\al}(f)e_{-\infty}\|_{\infty} \leq \|e_0T_{\al}(f)e_0\|_{\infty}^{\frac{1}{2}} \|e_{-\infty}T_{\al}(f)e_{-\infty}\|_{\infty}^{\frac{1}{2}} = 0.$$
Together with (\ref{eqn:e0e0perp}) this implies that any $\xi \in D(a^{\frac{1}{2}})$ satisfies
\begin{eqnarray}
\label{eqn:inprodEst}
\langle e_0T_{\al}(f)e_0\xi,\xi\rangle
& = & \Big\langle \Big(\sum_{k\leq 0}d_k\Big) T_{\al}(f)\Big(\sum_{l\leq 0}d_k\Big)\xi,\xi\Big\rangle \nonumber \\
& \leq & \sum_{k,l\leq 0}\|d_kT_{\al}(f)d_l\|_{\infty} \ \|d_k\xi\| \ \|d_l\xi\| \nonumber \\
& \leq & \sum_{k,l\leq 0} \|d_kT_{\al}(f)d_k\|_{\infty}^{\frac{1}{2}} \ \|d_lT_{\al}(f)d_l\|_{\infty}^{\frac{1}{2}} \ \|d_k\xi\| \ \|d_l\xi\| \nonumber\\
& = & \Big(\sum_{k\leq 0} \|d_kT_{\al}(f)d_k\|_{\infty}^{\frac{1}{2}} \ \|d_k\xi\|\Big)^2.
\end{eqnarray}
Since
$$\|d_kT_{\al}(f)d_k\|_{\infty}^{\frac{1}{2}} \leq 2^{\frac{k}{2}} = 2^{1/2}2^{(k-1)p/2p'}2^{(k-1)(1-\frac{p}{p'})/2}$$
we find by applying the Cauchy-Schwarz inequality in (\ref{eqn:inprodEst}),
$$\langle e_0T_{\al}(f)e_0\xi,\xi\rangle \leq 2\gamma_{p,p'} \sum_{k\leq 0} 2^{(k-1)p/p'}\|d_k\xi\|^2 = \Big\langle \sum_{k\leq 0} 2\gamma_{p,p'}2^{(k-1)p/p'}d_k\xi,\xi\Big\rangle.$$
By similar reasoning,
$$\langle e_0^{\perp}T_{\al}(f)e_0^{\perp}\xi,\xi\rangle \leq \Big\langle\sum_{k>0}2\delta_{q,q'}2^{(k-1)q/q'} d_k\xi,\xi\Big\rangle.$$
Putting these estimates together we conclude that $\xi \in D(T_{\al}(f)^{\frac{1}{2}})$ and
$$\langle T_{\al}(f)\xi,\xi\rangle \leq \sum_{k\leq 0}4\gamma_{p,p'}2^{(k-1)p/p'} \langle d_k\xi,\xi\rangle +  \sum_{k>0}4\delta_{q,q'}2^{(k-1)q/q'}\langle d_k\xi,\xi\rangle \leq \langle a\xi,\xi\rangle,$$
which establishes (\ref{eqn:majProp1}) (cf.\ \cite{Pag07}, Proposition 4.5). This completes the proof in the case $q<\infty$ under the additional assumption
$\kappa_{p,q}\leq 1$.\par In the general case, define $\tilde{T}_{\al}(f)=\kappa_{p,q}^{-1}T_{\al}(f)$. If $e^{(\theta)}$ satisfies (\ref{eqn:weakType}), then
$\tilde{e}^{(\theta)}:=e^{(\kappa_{p,q}\theta)}$ satisfies, for $r=p,q$,
$$\tr((\tilde{e}^{(\theta)})^{\perp}) \leq (C_r\kappa_{p,q}^{-1}\theta^{-1})^r\tr(f) \ \ \mathrm{and}
 \ \ \tilde{e}^{(\theta)}\tilde{T}_{\al}(f)\tilde{e}^{(\theta)}\leq \theta, \ \mathrm{for \ all} \ \al \in A.$$
Therefore, $\tilde{\kappa}_{p,q}\leq 1$ and by the above we find an $\tilde{a}\in S(\si)_+$ such that $\tilde{T}_{\al}(f)\leq \tilde{a}$ for all $\al \in A$ and
$$\mu_t(\tilde{a}) \leq K_{p,p',q,q'}S_{p,q}\mu(f)(t) \qquad (t>0).$$
The operator $a:=\kappa_{p,q}\tilde{a}$ has the desired properties.\par
Suppose now that $q=\infty$. Let us first note that if $\tr(f)=\infty$, then $\mu(f) = \chi_{[0,\infty)}$ and we can take $a=C_{\infty}1$. If $\tr(f)<\infty$, then we may assume that $\kappa_{p,\infty}\leq 1$. For $k\leq 0$ we set
$$e_k = \Big(\bigwedge_{0\geq l\geq k} e_p^{(2^l)}\Big)$$
and let $d_k=e_k-e_{k-1}$ as before. We define $a \in \cN_+$ to be the operator
$$a = 2 e_0^{\perp} + \sum_{k\leq 0}4\gamma_{p,p'}2^{(k-1)p/p'} d_k.$$
By following the argument presented above one shows that $a$ satisfies (\ref{eqn:majProp1}) and (\ref{eqn:majProp2}). The details are left to the reader.
\end{proof}
\begin{remark}
\emph{If $\cN$ is commutative, then one can show using essentially the same arguments that $a\in S(\si)_+$ defined by
$$a = \sum_{k \in \Z}2^{k+1} d_k$$
satisfies (\ref{eqn:majProp1}) and
\begin{equation*}
\label{eqn:majProp2com}
\mu_t(a) \leq 4\kappa_{p,q} S_{p,q}\mu(f)(t)\qquad (t>0).
\end{equation*}
In this case one uses that $d_kT_{\al}(f)d_l=0$ whenever $k\neq l$.}
\end{remark}
In order to obtain interpolation results for noncommutative maximal operators, we need to extend Lemma~\ref{lem:majorant} from projections to arbitrary measurable operators. We achieve this by representing a given measurable operator $x$ as a series of weighted projections and applying
Lemma~\ref{lem:majorant} term-wise.
\begin{theorem}
\label{thm:CaldDom} Fix $1\leq p<q\leq\infty$. Let $(T_{\al})_{\al \in A}$ be a net of order preserving, sublinear operators which is of restricted weak types $(p,p)$ and $(q,q)$. Let $p<p'$ and, if $q<\infty$, we fix $q'<q$. Then for any $x\in L^{p',1}(\cM)_+ + L^{q',1}(\cM)_+$ there exists an $a \in S(\si)_+$ such that $T_{\al}(x)\leq a$ for all $\al \in A$ and
\begin{equation}
\label{eqn:CaldDom} \mu(a) \prec\prec 4\kappa_{p,q}K_{p,p',q,q'} \ S_{p',q'}\mu(x).
\end{equation}
If $q=\infty$, then for any $x\in L^{p',1}(\cM)_+ + \cM_+$ there exists an $a \in S(\si)_+$ satisfying (\ref{eqn:CaldDom}) with $q'=\infty$ and $T_{\al}(x)\leq a$ for all $\al \in A$.
\end{theorem}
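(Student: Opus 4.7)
The plan is a dyadic spectral reduction of $x$ to the projection case of Lemma~\ref{lem:majorant}. Set $e_k := \chi_{(2^k,\infty)}(x)$ for $k \in \Z$. Since $\mu_t(x) \to 0$ as $t \to \infty$ for any $x \in L^{p',1}(\cM)_+ + L^{q',1}(\cM)_+$, each $e_k$ has finite trace and hence is a projection in $L^{p,1}(\cM)_+ + L^{q,1}(\cM)_+$. Splitting the layer-cake identity $x = \int_0^\infty \chi_{(s,\infty)}(x)\,ds$ over the dyadic intervals $[2^{k-1},2^k]$ yields the sandwich
$$x \leq \sum_{k \in \Z} 2^k e_k \leq 2x,$$
so in particular the middle series converges in $S(\tr)_+$ as an increasing sequence of partial sums bounded above by $2x$.

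Now apply Lemma~\ref{lem:majorant} to each $e_k$ to obtain $a_k \in S(\si)_+$ with $T_\al(e_k) \leq a_k$ for all $\al \in A$ and $\mu(a_k) \leq \kappa_{p,q} K_{p,p',q,q'}\, S_{p',q'}\mu(e_k)$, and define
$$a := 2\sum_{k \in \Z} 2^k a_k.$$
Combining the series form of~(\ref{eqn:subMajAdd}) with the linearity and positivity of $S_{p',q'}$ gives
$$\mu(a) \prec\prec 2\sum_k 2^k \mu(a_k) \leq 2\kappa_{p,q} K_{p,p',q,q'}\, S_{p',q'}\Big(\sum_k 2^k \chi_{[0,\tr(e_k))}\Big),$$
using $\mu(e_k) = \chi_{[0,\tr(e_k))}$. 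The key pointwise estimate
$$\sum_{k \in \Z} 2^k \chi_{[0,\tr(e_k))}(t) \leq 2\mu_t(x) \qquad (t>0)$$
is immediate from the definition of $\mu_t$: for fixed $t > 0$, $\tr(e_k) > t$ is equivalent to $2^k < \mu_t(x)$, and summing the resulting geometric tail yields at most $2\mu_t(x)$. Together these estimates give~(\ref{eqn:CaldDom}) with constant $4\kappa_{p,q} K_{p,p',q,q'}$ and simultaneously show that the partial sums defining $a$ are uniformly bounded in measure, so $a$ is a well-defined element of $S(\si)_+$ by \cite{Pag07}, Theorem~5.10.

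For the pointwise domination $T_\al(x) \leq a$, apply finite-sum sublinearity to the truncations $y_N := \sum_{|k| \leq N} 2^k e_k$, yielding $T_\al(y_N) \leq \sum_{|k| \leq N} 2^k a_k \leq a/2$. Order preservation together with the sandwich $x \leq y_N + (y - y_N)$, where $y := \sum_k 2^k e_k$, reduces the task to bounding the tail $T_\al(y - y_N)$; the factor $2$ built into $a$ is precisely what absorbs this tail, which is itself majorized by a second dyadic application and vanishes in measure by the $\mu$-control above.

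The case $q = \infty$ is handled identically with $q' = \infty$: the only new feature is that some projections $e_k$ may now have infinite trace, but this is exactly the scenario addressed by the $q = \infty$ branch of Lemma~\ref{lem:majorant}, producing majorants $a_k \in \cN_+$. The principal technical obstacle is the careful passage from finite-sum sublinearity to the infinite dyadic sum in establishing $T_\al(x) \leq a$, since neither continuity nor normality of $T_\al$ is part of the hypotheses; order preservation, the uniform $\mu$-bound above, and absorption of the tail $T_\al(y - y_N)$ by the factor $2$ in $a$ are what make this step rigorous.
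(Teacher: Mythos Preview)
Your overall strategy---dyadic spectral decomposition of $x$ followed by term-wise application of Lemma~\ref{lem:majorant}---is exactly the paper's approach, and your derivation of the submajorization bound $\mu(a)\prec\prec 4\kappa_{p,q}K_{p,p',q,q'}\,S_{p',q'}\mu(x)$ is correct. The gap is in the step you yourself flag as the ``principal technical obstacle'': passing from $T_\al(y_N)\leq a/2$ to $T_\al(x)\leq a$.

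Your proposed mechanism does not work. From $x\leq y = y_N + (y-y_N)$ you get $T_\al(x)\leq T_\al(y_N)+T_\al(y-y_N)\leq a/2 + T_\al(y-y_N)$, so everything hinges on controlling $T_\al(y-y_N)$. But the ``factor $2$ built into $a$'' cannot absorb this tail: to bound $T_\al(y-y_N)$ by $\sum_{|k|>N}2^k a_k$ (or by $a/2$) is precisely the infinite sublinearity you are trying to avoid, and the ``second dyadic application'' you invoke is circular---it reproduces the same limiting problem one level down. Neither order preservation nor the uniform $\mu$-bound on $a$ says anything about $T_\al(y-y_N)$; those tools control the \emph{majorant}, not the image of the tail under $T_\al$.

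The paper closes this gap by a continuity argument you have not used. From restricted weak type $(p,p)$ one deduces (exactly as in the commutative case, \cite{BeS88}, Theorem~5.3) that each $T_\al$ is of M-weak type $(p',p')$, hence bounded from $L^{p',1}(\cM)$ to $L^{p',\infty}(\cN)$. For $x\in L^{p',1}(\cM)_+$ one has $\hat x_N\to\hat x$ in $L^{p',1}$ by dominated convergence, so $T_\al(\hat x_N)\to T_\al(\hat x)$ in measure, and the inequality $T_\al(\hat x_N)\leq a$ passes to the limit. A parallel argument handles $x\in L^{q',1}(\cM)_+$ when $q<\infty$. When $q=\infty$ and $x\in\cM_+$, continuity is replaced by a direct tail estimate: the upper dyadic sum terminates (since $\|x\|_\infty<\infty$), and the lower tail satisfies $\sum_{k\leq -N}2^k e_k\leq 2^{-N+1}\chi_{(0,\infty)}(x)$, whence $T_\al$ of the lower tail is bounded by $2^{-N+1}C_\infty\mathbf{1}\to 0$ via order preservation and the $(\infty,\infty)$ hypothesis. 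Finally, general $x$ is handled by splitting $x=x_1+x_2$ into the two endpoint spaces and summing the two majorants; this split, not a tail-absorption trick, is the source of the second factor of $2$ in the constant.
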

\begin{proof}
Suppose first that $x \in L^{p',1}(\cM)$. Let $\lambda(x)$ be the spectral measure of $x$. For any $k\in\Z$, define $f_k = \lambda_{(2^k,\infty)}(x)$ and consider the dyadic discretization $\hat{x} = \sum_{j\in\Z} 2^{j+1} \lambda_{(2^j,2^{j+1}]}(x)$. Clearly, $x\leq\hat{x}\leq 2x$. By summation by parts,
$$\hat{x} = \sum_{j\in \Z} \sum_{k\leq j}2^k \lambda_{(2^j,2^{j+1}]}(x) =
\sum_{k\in \Z} \sum_{j\geq k}2^k \lambda_{(2^j,2^{j+1}]}(x) = \sum_{k\in \Z} 2^k f_k.$$
Also, by (\ref{eqn:DRprojSum}) and summation by parts
\begin{align}
\label{eqn:muTildex}
\mu(\hat{x}) & = \sum_{j\in\Z} 2^{j+1} \chi_{[\tr(f_{j+1}),\tr(f_j))} \nonumber \\
& = \sum_{j\in\Z}\sum_{k\leq j} (2^{k+1}-2^k) \chi_{[\tr(f_{j+1}),\tr(f_j))} = \sum_{k\in\Z}2^k\chi_{[0,\tr(f_k))} = \sum_{k\in\Z}2^k\mu(f_k).
\end{align}
Let $\hat{a}_k \in S(\si)_+$ be the operator obtained by applying Lemma~\ref{lem:majorant} to $f_k$. For $N\geq 1$ define $\hat{x}_N = \sum_{k=-N}^N 2^{k} f_k$ and set $a_N=\sum_{k=-N}^N 2^k\hat{a}_k$. By sublinearity of $T_{\al}$,
$$T_{\al}(\hat{x}_N)\leq \sum_{k=-N}^N 2^kT_{\al}(f_k)\leq a_N.$$
By (\ref{eqn:subMajAdd}),
\begin{equation*}
\mu(a_N) \prec\prec \sum_{k=-N}^N 2^k\mu(\hat{a}_k).
\end{equation*}
Using lemma~\ref{lem:majorant}, linearity of $S_{p',q'}$ and (\ref{eqn:muTildex}) we find for any $t>0$
\begin{align*}
\sum_{k=-N}^N 2^k\mu_t(\hat{a}_k) & \leq \kappa_{p,q}K_{p,p',q,q'}\sum_{k=-N}^N 2^k S_{p',q'}\mu(f_k)(t) \\
& \leq \kappa_{p,q}K_{p,p',q,q'} \ S_{p',q'}\mu(\hat{x})(t) \\
& \leq 2\kappa_{p,q}K_{p,p',q,q'} \ S_{p',q'}\mu(x)(t).
\end{align*}
This shows in particular that $(a_N)_{N\geq 1}$ is increasing and bounded in measure. Therefore, by \cite{Pag07}, Theorem 5.10 there exists an $a \in S(\si)_+$ such that $a_N\uparrow a$ in $S(\si)_+$. Since $\mu(a_N)\uparrow \mu(a)$, we conclude by monotone convergence that (\ref{eqn:CaldDom}) holds. It is clear that $T_{\al}(\hat{x}_N)\leq a$ for all $N\geq 1$. Note that $T_{\al}$ is of M-weak type $(p',p')$, with M-weak type constant bounded by $C_p(p'-1)^{-1}$, by the same argument as in the commutative case (\cite{BeS88}, Theorem 5.3). Therefore,
$$\|T_{\al}(\hat{x}) - T_{\al}(\hat{x}_N)\|_{p',\infty} \leq \|T_{\al}(\hat{x} - \hat{x}_N)\|_{p',\infty} \lesssim_{p,p'} \|\hat{x} - \hat{x}_N\|_{p',1} \rightarrow 0,$$
as $N\rightarrow \infty$ by dominated convergence. In particular, $T_{\al}(\hat{x}_N)\rightarrow T_{\al}(\hat{x})$ in measure. Since $T_{\al}$ is order preserving, we conclude that
\begin{equation}
\label{eqn:domTalBya}
T_{\al}(x) \leq T_{\al}(\hat{x})\leq a.
\end{equation}
The result follows analogously if $x \in L^{q',1}(\cM)$ if $q'<q<\infty$.\par
Suppose now that $x \in \cM_+$ and $q=\infty$. Let $N^*$ be such that $2^{N^*}\leq \|x\|_{\infty} \leq 2^{N^*+1}$ and define for all $N\geq 1$ the operator $a_N=\sum_{-N\leq k\leq N^*}2^k \hat{a}_k$ in $\cN_+$. By the argument above, the operator $a = \lim_{N\rightarrow\infty} a_N$ is well-defined in $S(\si)_+$ and
$$\mu(a)\prec\prec 2\kappa_{p,\infty}K_{p,p',\infty,\infty} \ S_{p',\infty}\mu(x)$$
Since $T_{\al}$ is sublinear and order preserving, we have for any $N\geq 1$,
\begin{align*}
T_{\al}(\hat{x}) & \leq T_{\al}\Big(\sum_{k\leq -N} 2^{k} f_k\Big) + \sum_{k=-N+1}^{N^*} 2^{k} T_{\al}(f_k) \\
& \leq 2^{-N+1} T_{\al}(\lambda_{(0,\infty)}(x))  + \sum_{k\in \Z} 2^{k} T_{\al}(f_k) \leq 2^{-N+1} C_{\infty} \id + a.
\end{align*}
As this holds for all $N\geq 1$, we conclude that again (\ref{eqn:domTalBya}) holds.\par
Finally, let $x=x_1+x_2$ with $x_1 \in L^{p',1}(\cM)_+$ and $x_2 \in L^{q',1}(\cM)_+$ (or $x_2 \in \cM_+$ if $q=\infty$) and let $a_1,a_2 \in S(\si)_+$ be two operators verifying the asserted properties for $x_1,x_2$. Set $a=a_1+a_2$, then $T_{\al}(x)\leq a$ and moreover,
\begin{align*}
\mu(a) & \prec\prec \mu(a_1) + \mu(a_2) \\
& \prec\prec 2\kappa_{p,q}K_{p,p',q,q'} \ S_{p',q'}(\mu(x_1) + \mu(x_2)) \leq 4\kappa_{p,q}K_{p,p',q,q'} \ S_{p',q'}(\mu(x)).
\end{align*}
This concludes the proof.
\end{proof}
\begin{remark}
\emph{One cannot replace (\ref{eqn:CaldDom}) in Theorem~\ref{thm:CaldDom} by the stronger assertion
\begin{equation*}
\mu(a) \leq 4\kappa_{p,q}K_{p,p',q,q'} \ S_{p',q'}\mu(x).
\end{equation*}
Indeed, if $\cN$ is commutative this would mean that
\begin{equation*}
\mu(a) \lesssim_{p,q} S_{p,q}\mu(x),
\end{equation*}
as $K_{p,p',q,q'}$ is not singular for $p'\downarrow p$ or $q'\uparrow q$ in this case. In particular, by Calder\'{o}n's characterization (\ref{eqn:caldDomClas}) this would imply that every maximal operator of restricted weak types $(1,1)$ and $(\infty,\infty)$ is in fact of weak types $(1,1)$ and $(\infty,\infty)$. However, this is not true (see e.g.\ \cite{HaJ05} for a counterexample).}
\end{remark}

\section{Interpolation of noncommutative maximal inequalities}
\label{sec:mainResults}

To extract interpolation results from Theorem~\ref{thm:CaldDom} we recall the fundamental connection, due to Boyd \cite{Boy69}, between Boyd's indices and Calder\'{o}n's operators. The Boyd indices of a symmetric Banach function space $E$ on $\R_+$ are defined by
$$p_E = \lim_{s\rightarrow\infty}\frac{\log s}{\log \|D_{1/s}\|} \ \ \mathrm{and} \ \ q_E = \lim_{s\downarrow 0}\frac{\log s}{\log \|D_{1/s}\|},$$
where $D_{1/s}$ is the dilation operator $(D_{1/s}g)(t) = g(t/s), \ t \in \R_+$.
\begin{theorem}
\label{thm:BoydObsOrig} \cite{Boy69} If $E$ is a symmetric Banach function space on $\R_+$, then the following hold.
\begin{enumerate}
\item If $1\leq p<q<\infty$, then $S_{p,q}$ is bounded on $E$ if and only if $p<p_E\leq q_E<q$.
\item If $1\leq p<\infty$, then $S_{p,\infty}$ is bounded on $E$ if and only if $p<p_E$.
\end{enumerate}
\end{theorem}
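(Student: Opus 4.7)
The plan is to prove Boyd's theorem by realizing $S_{p,q}$ as a weighted average of dilation operators $D_{1/u}$. This reduces the boundedness question to controlling the growth of $\|D_{1/u}\|_{E\to E}$, which is exactly what the Boyd indices $p_E$ and $q_E$ measure.

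First I would establish the decomposition: substituting $u=t/s$ in each of the two terms of $S_{p,q}g(t)$ yields
$$S_{p,q}g = \frac{1}{q}\int_0^1 u^{-1/q-1}D_{1/u}g\,du + \frac{1}{p}\int_1^\infty u^{-1/p-1}D_{1/u}g\,du,$$
and analogously $S_{p,\infty}g = \frac{1}{p}\int_1^\infty u^{-1/p-1}D_{1/u}g\,du$. From here Minkowski's integral inequality in the $E$-norm gives
$$\|S_{p,q}g\|_E \leq \left(\frac{1}{q}\int_0^1 u^{-1/q-1}\|D_{1/u}\|_{E\to E}\,du + \frac{1}{p}\int_1^\infty u^{-1/p-1}\|D_{1/u}\|_{E\to E}\,du\right)\|g\|_E.$$

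For sufficiency, the definition of $p_E$ provides, for every $\eps>0$, a constant $C_\eps$ with $\|D_{1/u}\|_{E\to E}\leq C_\eps u^{1/p_E+\eps}$ for $u\geq 1$, so the second integral converges whenever $p<p_E$ (pick $\eps$ with $1/p_E+\eps<1/p$). Similarly, $q_E<q$ gives $\|D_{1/u}\|_{E\to E}\leq C_\eps u^{1/q_E-\eps}$ for $u\leq 1$, making the first integrand comparable to $u^{1/q_E-1/q-1-\eps}$, integrable at $0$. For case (2) with $q=\infty$ only the second integral appears, so only $p<p_E$ is needed.

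For necessity, since $E$ is symmetric it suffices to test on decreasing $g$, and elementary lower-bound estimates on the defining integrals of $S_{p,q}$ give
$$S_{p,q}g\geq u^{-1/p}D_{1/u}g\ \ (u>1),\qquad S_{p,q}g\geq (u^{-1/q}-1)D_{1/u}g\ \ (0<u<1),$$
by restricting to intervals where $g(s)\geq g(t/u)$. Boundedness of $S_{p,q}$ then forces $\|D_{1/u}\|_{E\to E}\lesssim u^{1/p}\|S_{p,q}\|$ as $u\to\infty$ and $\|D_{1/u}\|_{E\to E}\lesssim u^{1/q}\|S_{p,q}\|$ as $u\to 0^+$; taking logarithms and passing to the limit in the Boyd-index formulas yields $p_E\geq p$ and $q_E\leq q$. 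The strict inequalities are recovered by observing that a critical equality $p_E=p$ or $q_E=q$ would produce a divergent weighted integral of $\|D_{1/u}\|_{E\to E}$, contradicting the bound on $\|S_{p,q}\|$ obtained by applying it to carefully chosen power-type test functions that saturate the dilation norm growth.

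The main obstacle I expect is the \emph{strict} form of the necessity direction. The weighted-integral representation and Minkowski's inequality handle sufficiency cleanly, and the lower bounds above give non-strict versions of the index conditions without difficulty; but converting $p_E\geq p$ into $p_E>p$ requires a careful test-function construction exploiting the fundamental function of $E$ and the submultiplicativity of $s\mapsto\|D_{1/s}\|_{E\to E}$, ensuring the boundary case genuinely fails. This is the step where finer structural properties of symmetric Banach function spaces enter non-trivially.
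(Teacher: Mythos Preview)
The paper does not give its own proof of this theorem: it is stated with a citation to Boyd's original paper \cite{Boy69} and used as a black box. So there is nothing in the paper to compare against.

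Your outline is the classical argument and is essentially correct. The dilation decomposition and the Minkowski estimate handle sufficiency exactly as you describe, and your pointwise lower bounds $S_{p,q}g \geq u^{-1/p}D_{1/u}g$ for $u>1$ and $S_{p,q}g \geq (u^{-1/q}-1)D_{1/u}g$ for $0<u<1$ (valid for decreasing $g$) are right and yield the non-strict inequalities $p_E\geq p$, $q_E\leq q$ immediately.

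Your diagnosis of the obstacle is accurate: the passage to strict inequalities is the only genuinely nontrivial step. The standard resolution is not quite a ``test-function construction'' but rather the following. Submultiplicativity of $h(s)=\|D_{1/s}\|_{E\to E}$ implies that $\log h(s)/\log s$ decreases to its limit $1/p_E$ as $s\to\infty$, so in fact $h(s)\geq s^{1/p_E}$ for \emph{all} $s>1$. On the other hand, one shows that boundedness of $S_{p,\infty}$ on $E$ forces the integral $\int_1^\infty s^{-1/p-1}h(s)\,ds$ to be finite (this uses a duality/associate-space argument, or an iteration of the lower bound you already have, rather than a single test function). Combining these, $\int_1^\infty s^{1/p_E-1/p-1}\,ds<\infty$, which is exactly $p_E>p$. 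The $q$ side is symmetric. So the missing ingredient is not a specific test function but rather the equivalence between boundedness of $S_{p,\infty}$ and finiteness of the weighted dilation-norm integral; once you have that, strictness drops out of the pointwise bound $h(s)\geq s^{1/p_E}$.
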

By combining Theorems~\ref{thm:CaldDom} and \ref{thm:BoydObsOrig} we obtain the following interpolation theorem, which extends \cite{BCO12}, Theorem 5.4, as well as \cite{Dir12}, Theorem 6.5, to a larger class of noncommutative maximal operators. More importantly, we find a better interpolation constant.
\begin{theorem}
\label{thm:maxInt} Let $1\leq p<q\leq\infty$ and let $E$ be a fully symmetric Banach function space. Let $(T_{\al})_{\al\in A}$ be a net of order preserving, sublinear maps which is of restricted weak types $(p,p)$ and $(q,q)$. Let $\kappa_{p,q}$ be the constant in (\ref{eqn:kappapq}) and $K_{p,p',q,q'}$ be the constant in (\ref{eqn:majPropConst}). If $p<p'<p_E$ and either $q_E<q'<q<\infty$ or $q=\infty$, then for any $x\in E(\cM)_+$,
\begin{equation}
\label{eqn:maxIntGen} \|(T_{\al}(x))_{\al \in A}\|_{E(\cN;l^{\infty})} \leq 4\kappa_{p,q}K_{p,p',q,q'}\|S_{p',q'}\|_{E\rightarrow E} \ \|x\|_{E(\cM)}.
\end{equation}
\end{theorem}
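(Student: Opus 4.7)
The plan is to combine the pointwise domination from Theorem~\ref{thm:CaldDom} with the full symmetry of $E$ and Boyd's theorem. Fix $x \in E(\cM)_+$ and $p', q'$ satisfying the hypotheses of the theorem.

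First I would verify that Theorem~\ref{thm:CaldDom} is applicable to $x$. Under the Boyd-index conditions $p < p' < p_E$ and either $q_E < q' < q < \infty$ or $q = \infty$, the classical embedding theory for symmetric Banach function spaces gives $E \hookrightarrow L^{p',1} + L^{q',1}$ (respectively $E \hookrightarrow L^{p',1} + L^\infty$ when $q = \infty$), which by functional calculus transfers to a decomposition $x = x_1 + x_2$ with $x_1 \in L^{p',1}(\cM)_+$ and $x_2 \in L^{q',1}(\cM)_+$ (or $x_2 \in \cM_+$). One can take, for instance, $x_1 = x \lambda_{(1,\infty)}(x)$ and $x_2 = x \lambda_{[0,1]}(x)$ and check the required integrability from the dilation estimates underlying Boyd's theorem. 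Hence Theorem~\ref{thm:CaldDom} produces an $a \in S(\si)_+$ such that $T_\al(x) \leq a$ for all $\al \in A$ and
\begin{equation*}
\mu(a) \prec\prec 4\kappa_{p,q} K_{p,p',q,q'} \, S_{p',q'}\mu(x).
\end{equation*}

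Next I would exploit full symmetry of $E$: since $\mu(a) \prec\prec 4\kappa_{p,q} K_{p,p',q,q'} \, S_{p',q'}\mu(x)$ and $S_{p',q'}\mu(x) \in E$ by Theorem~\ref{thm:BoydObsOrig}, we get $a \in E(\cN)_+$ with
\begin{equation*}
\|a\|_{E(\cN)} = \|\mu(a)\|_E \leq 4\kappa_{p,q} K_{p,p',q,q'} \, \|S_{p',q'}\mu(x)\|_E \leq 4\kappa_{p,q} K_{p,p',q,q'} \, \|S_{p',q'}\|_{E \to E} \, \|x\|_{E(\cM)}.
\end{equation*}
Here the first inequality uses the definition of a fully symmetric space applied to the submajorization, and the second is simply the operator-norm bound for $S_{p',q'}$ on $E$, which is guaranteed to be finite by Theorem~\ref{thm:BoydObsOrig}.

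Finally, the dominating operator $a$ provides a witness in the definition of the norm on $E(\cN;l^\infty)_+$: since $T_\al(x) \leq a$ uniformly in $\al \in A$,
\begin{equation*}
\|(T_\al(x))_{\al \in A}\|_{E(\cN;l^\infty)} \leq \|a\|_{E(\cN)} \leq 4\kappa_{p,q} K_{p,p',q,q'} \, \|S_{p',q'}\|_{E \to E} \, \|x\|_{E(\cM)},
\end{equation*}
which is precisely the estimate~(\ref{eqn:maxIntGen}). The only real work has already been done inside Theorem~\ref{thm:CaldDom}; the main thing to be careful about here is ensuring that $x$ admits a decomposition compatible with Theorem~\ref{thm:CaldDom}, which is the reason the strict Boyd-index inequalities $p < p' < p_E$ and $q_E < q' < q$ appear in the hypotheses rather than equalities.
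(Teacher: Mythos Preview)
Your proof is correct and follows essentially the same route as the paper: verify the embedding $E(\cM)_+ \subset L^{p',1}(\cM)_+ + L^{q',1}(\cM)_+$ (or $L^{p',1}(\cM)_+ + \cM_+$) from the Boyd-index hypotheses, invoke Theorem~\ref{thm:CaldDom} to obtain the dominating operator $a$, and then use full symmetry of $E$ together with Theorem~\ref{thm:BoydObsOrig} to bound $\|a\|_{E(\cN)}$. Your treatment is slightly more explicit about the decomposition of $x$, but the argument is otherwise identical to the paper's.
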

\begin{proof}
If $x \in E(\cM)_+$ then $x \in L^{p',1}(\cM)_+ + L^{q',1}(\cM)_+$ if $q<\infty$ and $x \in L^{p',1}(\cM)_+ + \cM_+$ if $q=\infty$ by the assumptions on $p_E$ and $q_E$. Let $a\in S(\si)_+$ be the operator given by Theorem~\ref{thm:CaldDom}. Since $E$ is fully symmetric, it follows from (\ref{eqn:CaldDom}) and Theorem~\ref{thm:BoydObsOrig} that $a \in E(\cM)_+$ and
$$\|a\|_{E(\cM)}\leq 4\kappa_{p,q}K_{p,p',q,q'}\|S_{p',q'}\mu(x)\|_E \leq 4\kappa_{p,q}K_{p,p',q,q'}\|S_{p',q'}\|_{E\rightarrow E} \ \|x\|_{E(\cM)}.$$
Thus, $(T_{\al}(x))_{\al \in A}$ is in $E(\cN;l^{\infty})$ and (\ref{eqn:maxIntGen}) holds.
\end{proof}
The following Marcinkiewicz-type interpolation theorem for noncommutative maximal operators is a special case of Theorem~\ref{thm:maxInt}.
\begin{corollary}
\label{cor:Marcinkiewicz} Fix $1\leq p<p'<r<q'<q\leq\infty$. If $(T_{\al})_{\al\in A}$ is a net of order preserving, sublinear maps which is simultaneously of restricted weak types $(p,p)$ and $(q,q)$ with constants $C_p$ and $C_q$, then for any $x \in L^r(\cM)_+$,
$$\|(T_{\al}(x))_{\al \in A}\|_{L^r(\cN;l^{\infty})} \lesssim \max\{C_p,C_q\}\Big(\frac{p'}{p'-p} + \frac{q'}{q-q'}\Big)\Big(\frac{r}{r-p'}+\frac{r}{q'-r}\Big) \ \|x\|_{L^r(\cM)}.$$
In particular,
\begin{equation}
\label{eqn:MarcIntGeneral}
\|(T_{\al}(x))_{\al \in A}\|_{L^r(\cN;l^{\infty})} \lesssim \max\{C_p,C_q\}\Big(\frac{rp}{r-p} + \frac{rq}{q-r}\Big)^2 \ \|x\|_{L^r(\cM)}.
\end{equation}
\end{corollary}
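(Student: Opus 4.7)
The plan is to specialize Theorem~\ref{thm:maxInt} to $E = L^r$. Since $L^r$ is fully symmetric with Boyd indices $p_E = q_E = r$, the constraints $p<p'<p_E$ and $q_E<q'<q$ reduce exactly to the hypothesis $p<p'<r<q'<q$. Theorem~\ref{thm:maxInt} then gives
$$\|(T_{\al}(x))_{\al \in A}\|_{L^r(\cN;l^{\infty})} \leq 4\kappa_{p,q} K_{p,p',q,q'} \|S_{p',q'}\|_{L^r\to L^r}\|x\|_{L^r(\cM)},$$
so the task reduces to estimating these three factors in a form matching the claim.

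The norm of $S_{p',q'}$ I would compute by the classical substitution $u=s/t$, which expresses $S_{p',q'}g(t)$ as a superposition of dilations $g(\cdot\, u)$. Since the dilation has $L^r$-norm $u^{-1/r}\|g\|_{L^r}$, Minkowski's integral inequality together with two elementary one-variable integrals yields
$$\|S_{p',q'}\|_{L^r\to L^r}\leq \frac{r}{r-p'}+\frac{r}{q'-r},$$
both terms being finite precisely because $p'<r<q'$. For the remaining constants, definition (\ref{eqn:kappapq}) gives $\kappa_{p,q}\leq 4\max\{C_p,C_q\}$, with the analogous $\kappa_{p,\infty}\leq 2\max\{C_p,C_\infty\}$ when $q=\infty$; and summing the geometric series defining $\gamma_{p,p'}$ and $\delta_{q,q'}$, together with the elementary estimate $1-2^{-\beta}\geq (\log 2)\beta/2$ valid for $\beta\in(0,1]$, gives $\gamma_{p,p'}\lesssim p'/(p'-p)$ and $\delta_{q,q'}\lesssim q'/(q-q')$, so that
$$K_{p,p',q,q'}\leq 4\max\{\gamma_{p,p'},\delta_{q,q'}\}\lesssim \frac{p'}{p'-p}+\frac{q'}{q-q'}.$$
Inserting these bounds into the display above yields the first asserted inequality.

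For (\ref{eqn:MarcIntGeneral}), the plan is to optimize the free parameters by taking the midpoints $p'=(p+r)/2$ and $q'=(q+r)/2$. Then $p'-p=r-p'=(r-p)/2$, so $p'/(p'-p)=(p+r)/(r-p)\leq 2rp/(r-p)$ (using $p\geq 1$) and $r/(r-p')=2r/(r-p)\leq 2rp/(r-p)$; the symmetric computation on the $q$-side shows that both $q'/(q-q')$ and $r/(q'-r)$ are $\lesssim rq/(q-r)$. Consequently each of the two product factors in the first inequality is $\lesssim rp/(r-p)+rq/(q-r)$, and multiplying yields (\ref{eqn:MarcIntGeneral}). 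The case $q=\infty$ proceeds in parallel, with $S_{p',\infty}$ replacing $S_{p',q'}$ and $\kappa_{p,\infty}$, $K_{p,p',\infty,\infty}$ in place of their $q<\infty$ counterparts. There is no genuine obstacle here; the only delicate point is the bookkeeping of constants, so that the claimed singular orders $(r-p)^{-1}$ and $(q-r)^{-1}$ emerge with the correct powers.
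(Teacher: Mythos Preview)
Your proposal is correct and follows essentially the same approach as the paper: specialize Theorem~\ref{thm:maxInt} to $E=L^r$, bound $\|S_{p',q'}\|_{L^r\to L^r}$ (the paper cites Hardy's inequalities, you use the equivalent dilation/Minkowski argument), and then take the midpoints $p'=(p+r)/2$, $q'=(r+q)/2$ to obtain (\ref{eqn:MarcIntGeneral}). One small caveat in your bookkeeping: the estimate $\delta_{q,q'}\lesssim q'/(q-q')$ relies on $1-2^{-\beta}\gtrsim\beta$ with $\beta=(q-q')/q'$, which needs $\beta\leq 1$; when $\beta>1$ one has instead $\delta_{q,q'}\leq 2\leq 2\,p'/(p'-p)$, so the sum bound $K_{p,p',q,q'}\lesssim p'/(p'-p)+q'/(q-q')$ still holds.
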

\begin{proof}
Using Hardy's inequalities (see e.g.\ \cite{BeS88}, Lemma III.3.9) one readily shows that
\begin{equation}
\label{eqn:HardyIneqBound}
\|S_{p',q'}\|_{L^r\rightarrow L^r} \leq \Big(\frac{r}{r-p'} + \frac{r}{q'-r}\Big).
\end{equation}
Since $p_{L^r}=q_{L^r}=r$, the first assertion is now immediate from Theorem~\ref{thm:maxInt}. Taking $p'=p/2+r/2$ and $q'=r/2+q/2$ yields (\ref{eqn:MarcIntGeneral}).
\end{proof}
Let us compare this result to Theorem~\ref{thm:JuXmain}. Thanks to the strong type assumption on the right endpoint, the interpolation constant in (\ref{eqn:JuXmain}) does not depend on $q$. Also the dependence on the constants $C_p$ and $C_q$ is better than in (\ref{eqn:MarcIntGeneral}). On the other hand, Corollary~\ref{cor:Marcinkiewicz} requires only a restricted weak type assumption on both endpoints, which is easier to verify in practice, and the interpolation constant is of the right order under these conditions. Also, it is clear that the interpolation result for more general noncommutative symmetric spaces in Theorem~\ref{thm:maxInt} cannot be obtained using the real interpolation techniques used in \cite{JuX07}.\par
As an illustration of our interpolation results, we deduce Doob's maximal inequality. This result was obtained in \cite{Jun02,JuX07} for $E=L^p$ and, using a different argument using duality, for more general symmetric Banach function spaces in \cite{Dir12}. The proof here removes some unnecessary assumptions on $E$ from \cite{Dir12}, Theorem 6.7. Note that the assumption $p_{E}>1$ cannot be removed, as Doob's maximal inequality fails if $E=L^1$.
\begin{corollary}
\label{cor:Doob}
Let $\cM$ be a semi-finite von Neumann algebra and let $(\cE_n)_{n\geq 1}$ be an increasing sequence of conditional expectations in $\cM$. If $E$ is a symmetric Banach function space on $\R_+$ with $p_E>1$, then there is a constant $C_E$ depending only on $E$ such that
$$\|(\cE_n(x))_{n\geq 1}\|_{E(\cM;l^{\infty})} \leq C_E \|x\|_{E(\cM)} \qquad (x \in E(\cM)).$$
If $p>1$ then $C_{L^p}$ is of optimal order $\mathcal{O}((p-1)^{-2})$ as $p\downarrow 1$.
\end{corollary}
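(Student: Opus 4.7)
The plan is to check that the sequence $(\cE_n)_{n\geq 1}$ satisfies the hypotheses of Theorem~\ref{thm:maxInt} at the endpoints $p=1$ and $q=\infty$, and then invoke the theorem. Each $\cE_n$ is linear and positive, hence sublinear and order preserving. At the left endpoint, Cuculescu's classical inequality \cite{Cuc71} provides, for every $x \in L^1(\cM)_+$ and every $\theta>0$, a projection $e^{(\theta)}$ satisfying $\tr((e^{(\theta)})^{\perp}) \leq \theta^{-1}\|x\|_{L^1(\cM)}$ and $e^{(\theta)}\cE_n(x)e^{(\theta)} \leq \theta$ for every $n$. This is precisely M-weak type $(1,1)$ in the sense of Definition~\ref{def:weakType}, which specialized to projections gives restricted weak type $(1,1)$ with $C_1 = 1$. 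At the right endpoint, each $\cE_n$ is unital and positive, so $\cE_n(f) \leq \mathbf{1}$ for every projection $f \in \cM$, providing restricted weak type $(\infty,\infty)$ with $C_\infty = 1$.

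Since $p_E > 1$, I would pick any $p'$ with $1 < p' < p_E$; Theorem~\ref{thm:BoydObsOrig}(b) then ensures $\|S_{p',\infty}\|_{E \to E} < \infty$. The one delicate point is that Theorem~\ref{thm:maxInt} is stated for fully symmetric $E$, whereas Corollary~\ref{cor:Doob} assumes only that $E$ is symmetric with $p_E > 1$. However, $p_E > 1$ is equivalent to boundedness of the Hardy--Littlewood operator $H$ on $E$, and the elementary pointwise bound $\mu(g) \leq H\mu(g)$ implies that $g \prec\prec h$ yields $\|g\|_E \leq \|H\|_{E\to E}\|h\|_E$. Thus the submajorization furnished by Theorem~\ref{thm:CaldDom} still converts into a norm estimate on $E$, at the cost of one extra factor of $\|H\|_{E\to E}$; equivalently, $E$ admits an equivalent fully symmetric renorming. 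Tracing through the proof of Theorem~\ref{thm:maxInt} with this adjustment then yields
\[
\|(\cE_n(x))_{n\geq 1}\|_{E(\cM;l^{\infty})} \leq C_E \|x\|_{E(\cM)},
\]
with $C_E$ depending only on $E$ (through $\|H\|_{E\to E}$, $\|S_{p',\infty}\|_{E\to E}$ and $p'$).

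For the optimality claim with $E = L^p$, I apply Theorem~\ref{thm:maxInt} directly with endpoints $1,\infty$, taking the auxiliary index $p' = (1+p)/2$. Since $C_1 = C_\infty = 1$ one has $\kappa_{1,\infty} \leq 2$, while $K_{1,p',\infty,\infty} \leq 4\gamma_{1,p'}$ with $\gamma_{1,p'} = \cO((p'-1)^{-1}) = \cO((p-1)^{-1})$ for this choice of $p'$, and Hardy's inequality gives $\|S_{p',\infty}\|_{L^p \to L^p} \leq p/(p-p') = \cO((p-1)^{-1})$. Multiplying these three factors produces $C_{L^p} = \cO((p-1)^{-2})$ as $p \downarrow 1$, matching the optimal lower bound of \cite{JuX05}. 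The main obstacle is the symmetric-versus-fully-symmetric mismatch addressed in the second paragraph; once this is handled, the remainder is a direct bookkeeping exercise with the constants furnished by Theorem~\ref{thm:maxInt}.
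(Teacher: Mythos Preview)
Your proof is correct and follows essentially the same route as the paper: verify M-weak type $(1,1)$ via Cuculescu and (restricted weak/strong) type $(\infty,\infty)$ trivially, handle the symmetric-versus-fully-symmetric gap by noting that $p_E>1$ makes $H=S_{1,\infty}$ bounded on $E$ (the paper cites \cite{DPP11} for this, while you spell out the argument), and then invoke Theorem~\ref{thm:maxInt}. Your direct computation of the $L^p$ constant with $p'=(1+p)/2$ is exactly the specialization carried out in Corollary~\ref{cor:Marcinkiewicz}.
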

\begin{proof}
If $p_E>1$, then $E$ is fully symmetric up to a constant, i.e., if $g \in S(\R_+)$ and $h \in E$ satisfy $g\prec\prec h$, then $g \in E$ and $\|g\|_E\lesssim_E \|h\|_E$ (see the proof of Lemma 3.6 in \cite{DPP11}). Since $(\cE_n)_{n\geq 1}$ is of M-weak type $(1,1)$ (cf.\ \cite{Cuc71}) and strong type $(\infty,\infty)$, the result now follows from Theorem~\ref{thm:maxInt} and Corollary~\ref{cor:Marcinkiewicz}.
\end{proof}
To conclude this paper, we deduce an interpolation theorem for the generalized moments of noncommutative maximal operators from Theorem~\ref{thm:CaldDom}. Recall the following definitions. Let $\Phi:[0,\infty)\rightarrow [0,\infty]$ be a convex Orlicz function, i.e., a continuous, convex and increasing function satisfying $\Phi(0)=0$ and $\lim_{t\rightarrow \infty}\Phi(t) = \infty$. The \emph{Orlicz space} $L_{\Phi}$ is the subspace of all $f$ in $S(\R_+)$ such that for some $k>0$,
$$\int_0^{\infty} \Phi\Big(\frac{|f(t)|}{k}\Big) dt<\infty.$$
We may equip $L_{\Phi}$ with the Luxemburg norm
$$\|f\|_{L_{\Phi}} = \inf\Big\{k>0 \ : \ \int_0^{\infty} \Phi\Big(\frac{|f(t)|}{k}\Big) dt\leq 1\Big\}.$$
Under this norm $L_{\Phi}$ is a symmetric Banach function space \cite{BeS88}. We define the indices of $\Phi$ by
\begin{equation*}
\begin{split}
p_{\Phi} & = \sup\Big\{p>0 \ : \ \int_0^t s^{-p}\Phi(s)\frac{ds}{s} \simeq_p t^{-p}\Phi(t) \ \mathrm{for \ all} \ t>0\Big\}, \\
q_{\Phi} & = \inf\Big\{q>0 \ : \ \int_t^{\infty} s^{-q}\Phi(s)\frac{ds}{s} \simeq_q t^{-q}\Phi(t) \ \mathrm{for \ all} \ t>0\Big\}.
\end{split}
\end{equation*}
We say that $\Phi$ satisfies the global $\Delta_2$-condition if for some constant $C>0$,
\begin{equation}
\label{eqn:D2global}
\Phi(2t) \leq C \Phi(t) \qquad (t\geq 0).
\end{equation}
It is known that $p_{\Phi}$ and $q_{\Phi}$ coincide with the Boyd indices of $L_{\Phi}$. Moreover, one may show that (\ref{eqn:D2global}) holds if and only if $q_{\Phi}<\infty$. For more details we refer to the discussion in \cite{Dir12}.
\begin{theorem}
\label{thm:intMaxOrl} Let $1\leq p<q\leq\infty$ and let $\Phi$ be an Orlicz function satisfying the global $\Delta_2$-condition. Let $(T_{\al})_{\al\in A}$ be a net of order preserving, sublinear maps which is of restricted weak types $(p,p)$ and $(q,q)$. If $p<p_{\Phi}$ and either $q_{\Phi}<q<\infty$ or $q=\infty$, then for any $x\in L_{\Phi}(\cM)_+$ there exists an $a \in L_{\Phi}(\cN)_+$ such that $T_{\al}(x)\leq a$ for all $\al \in A$ and
$$\si(\Phi(a)) \lesssim_{p,q,\Phi} \tr(\Phi(x)).$$
\end{theorem}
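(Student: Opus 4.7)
The plan is to reduce Theorem~\ref{thm:intMaxOrl} to a modular Hardy--Orlicz inequality, using the Calder\'{o}n-type domination furnished by Theorem~\ref{thm:CaldDom}. Because $q_{\Phi}<\infty$ (which is equivalent to the global $\Delta_{2}$-condition on $\Phi$), I can fix auxiliary indices $p<p'<p_{\Phi}$ and, if $q<\infty$, some $q_{\Phi}<q'<q$; otherwise $q=q'=\infty$. Theorem~\ref{thm:CaldDom} then produces an $a\in S(\si)_{+}$ satisfying $T_{\al}(x)\leq a$ for every $\al\in A$, together with
\[
\mu(a)\prec\prec C_{0}\,S_{p',q'}\mu(x),\qquad C_{0}=4\kappa_{p,q}K_{p,p',q,q'}.
\]

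Next I would translate this submajorization into an Orlicz-modular estimate. Spectral calculus yields $\mu_{t}(\Phi(a))=\Phi(\mu_{t}(a))$, and combined with the identity $\si(\Phi(a))=\int_{0}^{\infty}\mu_{t}(\Phi(a))\,dt$ this gives $\si(\Phi(a))=\int_{0}^{\infty}\Phi(\mu_{t}(a))\,dt$. Since $\Phi$ is convex, increasing and satisfies $\Phi(0)=0$, the Hardy--Littlewood--P\'{o}lya principle applied to the submajorization above delivers
\[
\si(\Phi(a))\leq \int_{0}^{\infty}\Phi\bigl(C_{0}\,S_{p',q'}\mu(x)(t)\bigr)\,dt.
\]
Iterating $\Phi(2u)\leq C\Phi(u)$ produces $\Phi(C_{0}u)\leq C_{1}\Phi(u)$ with $C_{1}$ depending only on $\Phi$, $p$ and $q$, so Theorem~\ref{thm:intMaxOrl} is reduced to the modular Hardy--Orlicz estimate
\[
\int_{0}^{\infty}\Phi\bigl(S_{p',q'}f(t)\bigr)\,dt\lesssim_{\Phi,p',q'}\int_{0}^{\infty}\Phi(f(t))\,dt
\]
for any positive decreasing $f$, applied with $f=\mu(x)$.

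The main obstacle is this last modular estimate for $S_{p',q'}$, and I would dispatch the two halves of the operator independently. For the left piece $t^{-1/p'}\int_{0}^{t}s^{1/p'-1}f(s)\,ds$ the condition $p'<p_{\Phi}$ provides, by the defining property of $p_{\Phi}$, the asymptotic $\int_{0}^{u}s^{-p'}\Phi(s)\,\frac{ds}{s}\lesssim_{p'}u^{-p'}\Phi(u)$; a Fubini-type interchange combined with the convexity of $\Phi$ then yields the required modular bound on this half. The right piece $t^{-1/q'}\int_{t}^{\infty}s^{1/q'-1}f(s)\,ds$ is handled symmetrically using $q_{\Phi}<q'$, and when $q=\infty$ only the left piece is present. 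These are the Orlicz analogues of the weighted Hardy inequalities used in the proof of Corollary~\ref{cor:Marcinkiewicz}, and can be extracted from the discussion of Orlicz indices in \cite{Dir12}. Chaining the four steps yields $\si(\Phi(a))\lesssim_{p,q,\Phi}\tr(\Phi(x))$, while the operator $a$ already dominates the entire net $(T_{\al}(x))_{\al\in A}$, completing the proof.
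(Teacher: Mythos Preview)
Your proposal is correct and follows essentially the same route as the paper: apply Theorem~\ref{thm:CaldDom} to obtain the submajorization $\mu(a)\prec\prec C_{0}S_{p',q'}\mu(x)$, then convert this into a modular inequality using the Orlicz index conditions $p'<p_{\Phi}\leq q_{\Phi}<q'$. The only difference is in how the submajorization is turned into a modular bound: you invoke the Hardy--Littlewood--P\'olya principle and then a modular Hardy--Orlicz inequality for $S_{p',q'}$, whereas the paper first passes to the pointwise bound $\mu(a)\leq H\mu(a)\lesssim HS_{p',q'}\mu(x)$ and then applies Zygmund's interpolation theorem \cite{Zyg56} to the operator $HS_{p',q'}$, which is bounded on two $L^{r}$-spaces bracketing $[p_{\Phi},q_{\Phi}]$; both executions are standard and yield the same conclusion.
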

\begin{proof}
Suppose that $q<\infty$. Fix $p<p'<\tilde{p}<p_{\Phi}\leq q_{\Phi}<\tilde{q}<q'<q$. Let $x\in L_{\Phi}(\cM)_+$ and let $a \in S(\si)_+$ be the operator provided by Theorem~\ref{thm:CaldDom}. We know that
$$\mu(a) \leq H\mu(a) \lesssim_{p,p',q,q'} HS_{p',q'}\mu(x).$$
By (\ref{eqn:HardyIneqBound}), $S_{p',q'}$ and $H=S_{1,\infty}$ are bounded on $L^{\tilde{p}}(\R_+)$ and $L^{\tilde{q}}(\R_+)$. By \cite{Zyg56}, Theorem 2 (see also \cite{Dir12}, Theorem 4.4), we can now conclude that $a\in L_{\Phi}(\cN)_+$ and
$$\si(\Phi(a)) \lesssim_{p,p',q,q',\Phi} \int_0^{\infty}\Phi(HS_{p',q'}\mu(x)(t))dt \lesssim_{p',\tilde{p},q',\tilde{q},\Phi} \int_0^{\infty}\Phi(\mu_t(x))dt = \tr(\Phi(x)).$$
The proof in the case $q=\infty$ is similar.
\end{proof}
In the special case that $(T_{\al})$ is of M-weak type $(p,p)$ and strong type $(\infty,\infty)$, the above result was obtained in \cite{BCO12}, Theorem 3.2. The general case proved here affirmatively answers an open question in this paper (\cite{BCO12}, Remark 3.3 (2)).

\end{document}